\pgfplotsset{compat=1.18}
\newtheorem{theorem}{Theorem}[section]
\newtheorem{lemma}[theorem]{Lemma}
\newtheorem{proposition}[theorem]{Proposition}
\theoremstyle{remark}
\theoremstyle{definition}
\newtheorem{definition}[theorem]{Definition}
\newcommand{\deb}{\rightharpoonup}
\newcommand{\R}{\mathbb{R}}
\newcommand{\N}{\mathbb{N}}
\newcommand{\Eps}{\mathcal{E}}
\tikzstyle{nodino}=[circle,draw,fill,inner sep=0pt,minimum size=0.5mm]
\tikzstyle{infinito}=[circle,inner sep=0pt,minimum size=0mm]
\tikzstyle{nodo}=[circle,draw,fill,inner sep=0pt, minimum size=0.5*width("k")]
\tikzstyle{nodo_vuoto}=[circle,draw,inner sep=0pt, minimum size=0.5*width("k")]
\tikzset{every loop/.style={min distance=10mm,in=300,out=240,looseness=10}}
\tikzset{place/.style={circle,thick,draw=blue!75,fill=blue!20,minimum
		size=6mm}}
\tikzset{place2/.style={circle,thick,draw=red!75,fill=red!20,minimum
		size=6mm}}
\title[NLSE with a $\delta'$ interaction]{Trapping by repulsion: \\the NLS with a delta-prime}
\author[R. Adami]{Riccardo Adami}
\address[R. Adami]{Politecnico di Torino, Dipartimento di Scienze Matematiche ``G.L. Lagrange'' Corso Duca degli Abruzzi 24, 10129, Torino, Italy.}
\email{riccardo.adami@polito.it}
\author[F. Boni]{Filippo Boni}
\address[F. Boni]{Scuola Superiore Meridionale, Largo S. Marcellino, 10, 80138, Napoli, Italy.}
\email{f.boni@ssmeridionale.it}
\author[M.~Gallone]{Matteo Gallone}
\address[M.~Gallone]{Scuola Internazionale Superiore di Studi Avanzati (SISSA), via Bonomea, 265, 34136 Trieste Italy}
\email{matteo.gallone@sissa.it}
\begin{document}

	\begin{abstract}
We establish the existence and provide explicit expressions for the stationary states of the one-dimensional Schrödinger equation with a repulsive delta-prime potential and a focusing nonlinearity of power type. Furthermore, we prove that, if the nonlinearity is subcritical, then 
ground states exist for any strength of the delta-prime interaction and for every positive value of the mass.

This result supplies an example of ground states arising from a repulsive potential, a counterintuitive phenomenon explained by the emergence of an additional dimension in the energy space, induced by the delta-prime interaction. This new dimension contains states of lower energy and is thus responsible for the existence of nonlinear ground states that do not originate from linear eigenfunctions.

The explicit form of the ground states is derived by addressing the ancillary problem of minimizing the action functional on the Nehari manifold. We solve such problem in the subcritical, critical, and supercritical regimes.

	\end{abstract}
	
	\maketitle

	\vspace{-.5cm}
	\noindent {\footnotesize {AMS Subject Classification:} 35Q40, 35Q55, 49J40.}
	
	\noindent {\footnotesize {Keywords:} Ground states, Nonlinear Schr\"odinger, Delta' interaction.}

\section{Introduction}\label{sec:Introduction}

Classical results show that the nonlinear Schrödinger equation (NLS) on the real line with a focusing, subcritical nonlinearity has a ground state for every value of the mass \cite{zakharov-shabat-78,caze-lions-82,cazenave-03}. A ground state for the NLS is defined as the function that minimizes the energy among all functions with the same mass, i.e.~the same $L^2$-norm.
 Up to translations and multiplications by a constant phase, the ground state at a given mass is unique, always positive, and does not change shape over time: only its phase evolves with a constant frequency. Such solutions are often called solitons.

If one adds a repulsive potential, e.g.~a  positive potential that vanishes at infinity, it is generally expected that these ground states no longer exist. This is because the added potential increases the energy of any function, making it impossible to reach the energy of the soliton in the absence of the potential. However, by translating the soliton towards infinity, such an energy can be approached without ever being reached. As a consequence, the infimum of the energy is not attained and there is no ground state.

In this paper, we present a counterexample to such a picture. To this end, we consider a  highly singular repulsive potential known as a $\delta'$
interaction, and place it at the origin of the line. The $\delta'$ potential belongs to the class of the point interactions as it has no effect to functions
that vanish around the origin. It can
be considered repulsive in two respects: first, its contribution to the energy is positive; second, in the absence of a nonlinear term, 
it does not sustain any bound state.

Remarkably, in presence of a nonlinear term we show not only that  ground states  exist, but also that they have a lower energy than the unperturbed soliton with the same mass.

Before going into the technical details, we briefly explain the  context to which our model 
belongs. 

Nonlinear Schr\"odinger equations  (NLS) with a point interaction have attracted in the last years a growing interest in  the physical as well as in the mathematical research. Physically, such equations are widely used to describe the dynamics of classical oscillator chains \cite{Bambusi2002,Gallone2021}, to model small-amplitude water waves \cite{Zakharov1972}, or  to recover several properties of the quantum many-body systems, such as the collective behaviour of the particles in a Bose-Einstein condensate \cite{KayKirkpatrick2011,JLP-2016,Li2021}. More generally, by their extremely localized nature, point interactions have been used to model short-range phenomena such as the effects of impurities or defects in a medium. In  particular, the one-dimensional $\delta'$ potential
has primarily been employed in the study of the Wannier–Stark effect (see \cite{Avron1994}).

    The equation we analyse  can be formally written as 
    \begin{equation}\label{eq:NLS-deltaPrime-Formal}
         i \partial_t \psi = \text{``}(-\Delta + \beta \delta')\text{''}\psi - |\psi|^{p-2}\psi \, , \qquad 2 < p
         < 6\, , \beta > 0,
    \end{equation}
where the quotation marks stress the formal character of the expression $- \Delta + \beta \delta'$.

Indeed, point interactions in space dimension $n$ are rigorously constructed as self-adjoint extensions of the Laplacian  restricted to 
the space $ C^\infty_0(\mathbb{R}^n \setminus \{0\})$ made of 
smooth functions whose support is compact and does not contain the origin \cite{Albeverio1988,Gallone-Michelangeli-Book}. 

In  dimension one, such a restricted Laplacian admits a four-parameter family of self-adjoint extensions, each corresponding by definition to a specific point interaction. In \eqref{eq:NLS-deltaPrime-Formal}
the symbols in quotation marks represent a 
particular choice in such a family.

The best known one-dimensional nonlinear model with a point interaction involves the Dirac's $\delta$ potential instead of the $\delta'$. NLS with $\delta$ interaction has been widely studied  on the real line \cite{Caudrelier2005,Cao-Malomed-95,Fuk-Ohta-Ozawa-08,Holm-Marz-Zwo-07,Fukuizumi2008-DCDS,HGoodman2004}, on the half-line \cite{Boni2023}, and on metric graphs \cite{Adami2012,Adami-Graph2014,Adami-graph2016,Serra2016,Boni-Dovetta-2022}.  It has been proved that in the presence of  an attractive $\delta$ interaction, nonlinear ground states exist for every value of the mass. On the contrary, if the $\delta$ interaction is repulsive, then ground states fail to exist for every value of the mass. In this regard, the $\delta$ interaction in dimension one exhibits the expected behaviour of an
    ordinary potential.

Concerning the  $\delta'$ interaction, in the attractive case all ground states  have been
singled out \cite{Adami-Noja-CMP2013-1D-NLS-deltaprime,Adami-Noja-Visc-13}. For the repulsive case the analysis of the stability was carried out for the sign-changing
solutions on the line \cite{Angulo-Goloshchapova-2020}, and for sign-preserving solutions on
star graphs \cite{Goloshchapova-2022}, in both cases leaving the ground states unconsidered.

More recently, non-linear dynamics with a different type of point interaction, the so-called F\"ul\"op-Tsutsui interaction, has been investigated too \cite{Adami-Rev,Flp-00,ABNR-25}.

In contrast with the one-dimensional case, in two and three dimensions point interactions always lead to ground states, regardless of whether the interaction is attractive or repulsive. In two dimensions, this fact is natural because at the linear level a point interaction always produces a negative eigenvalue. Adding a nonlinearity produces a branch of nonlinear ground states that bifurcate from the linear one.

In three dimensions the result is more surprising: even a repulsive delta interaction, which does not support bound states in the linear case, can give rise to nonlinear ground states. This happens as a consequence of a highly non trivial interplay between the singularity induced by the point interaction and nonlinearity. Mathematically, the singularity introduces
an additional dimension in the energy space, that acts as 
a sort of extra room allowing for states with a lower energy.

A similar but more subtle phenomenon happens for a  $\delta'$
interaction in one dimension: although the singularity here is a jump rather than a divergence, it still contributes negatively to the nonlinear energy. This peculiar phenomenon can be understood by recalling that a repulsive $\delta'$ potential can be
approximated by smooth potentials with a negative well \cite{Cheon1998,Exner2001}. We conjecture that the nonlinear ground state is inherited by the corresponding ground state induced by the approximating potentials. We plan to further investigate such a problem in the future.

As a matter of fact, for the subcritical nonlinearity $p < 6$
the existence of a ground state of \eqref{eq:NLS-delta'}
at every mass
is immediate, since the energy of a soliton centred at the origin is not affected by the presence of the
$\delta'$ (for more details see Sec.~\ref{sec:results}).
The energy of the soliton is then attained and, using concentration-compactness theory
as in \cite{Adami-Serra-Tilli-2016}, a ground state must exist.
However, rather than relying on abstract arguments, we explicitly construct ground states and show that their energy is strictly lower than that of the soliton.
This fact is particularly clear if the mass is large. Indeed, if one focuses on functions that vanish on the negative halfline, then the problem reduces to a NLS with a repulsive 
$\delta$
 interaction on the halfline,
 for which ground states have been proved to exist if the mass is large enough \cite{Boni2023}. A more detailed analysis is needed for small masses.

 Our main result is thus the existence of a ground state of \eqref{eq:NLS-delta'}
 at every mass in the presence of a subcritical nonlinearity $(2 < p < 6)$ and a repulsive 
 $\delta'$ interaction $(\beta > 0).$

 In order to identify the ground states it is useful to make recourse to the 
 easier problem of the minimization of the action functional, as ground states at some mass are
 also action minimizers at some frequency (see Lemma \ref{lem:link-GS-AM}, as well as \cite{DST-action-energy,Jeanjean-22}). Owing to this ancillary problem, one can
 explicitly represent all stationary states not only in the subcritical case, but even
in  the critical $(p=6)$ and in the supercritical $(p > 6)$. Whereas in the last case  there are no ground states, the critical case is usually interesting in itself and
 will be the subject of a forthcoming paper.

\begin{figure}[h!]
    \begin{tikzpicture}
  \begin{axis}[
    axis lines = middle,
    ylabel = {$\phi_\omega$},
    samples = 100,
    domain = -2:2,
    enlargelimits,
    xtick=\empty,
    ytick=\empty,
    xscale=1,
    yscale=0.66,
    ylabel style={anchor=west},
  ]
  
     \addplot[black, thick, domain=-1.5:0] {3/((cosh(3* x))^2)};
    
    \addplot[black, thick, domain=0:1.5] {3/((cosh(3* x))^2)};
    
    \node at (rel axis cs:0.95,0.95) {(a)};
  \end{axis}
\end{tikzpicture}

\begin{tikzpicture}
  \begin{axis}[
    axis lines = middle,
    ylabel = {$u_{1,\omega}$},
    samples = 100,
    domain = -2:2,
    enlargelimits,
    xtick=\empty,
    ytick=\empty,
    xscale=1,
    yscale=0.66,
    ylabel style={anchor=west},
  ]
  
     \addplot[black, thick, domain=-1.5:0] {3/((cosh(3* x+0.3))^2)};
    
    \addplot[black, thick, domain=0:1.5] {3/((cosh(3* x+0.9))^2)};
    
    \node at (rel axis cs:0.95,0.95) {(b)};
  \end{axis}
\end{tikzpicture} 
\begin{tikzpicture}
  \begin{axis}[
    axis lines = middle,
    ylabel = {$\tilde{u}_{1,\omega}$},
    samples = 100,
    domain = -2:2,
    enlargelimits,
    xtick=\empty,
    ytick=\empty,
    xscale=1,
    yscale=0.66,
    ylabel style={anchor=west},
  ]
  
     \addplot[black, thick, domain=-1.5:0] {3/((cosh(3* x+0.9))^2)};
    
    \addplot[black, thick, domain=0:1.5] {3/((cosh(3* x+0.3))^2)};
    
    \node at (rel axis cs:0.95,0.95) {(c)};
  \end{axis}
\end{tikzpicture}
\begin{tikzpicture}
  \begin{axis}[
    axis lines = middle,
    ylabel = {$u_{2,\omega}$},
    samples = 100,
    domain = -2:2,
    enlargelimits,
    xtick=\empty,
    ytick=\empty,
    xscale=1,
    yscale=0.66,
    ylabel style={anchor=east},
  ]
  
     \addplot[black, thick, domain=-1.5:0] {-3/((cosh(3* x+0.9))^2)};
    
    \addplot[black, thick, domain=0:1.5] {3/((cosh(3* x-0.9))^2)};
    
    \node at (rel axis cs:0.95,0.95) {(d)};
  \end{axis}
\end{tikzpicture} 
\begin{tikzpicture}
  \begin{axis}[
    axis lines = middle,
    ylabel = {$\tilde{u}_{2,\omega}$},
    samples = 100,
    domain = -2:2,
    enlargelimits,
    xtick=\empty,
    ytick=\empty,
    xscale=1,
    yscale=0.66,   
    ylabel style={anchor=east},        
  ]
  
     \addplot[black, thick, domain=-1.5:0] {-3/((cosh(3* x-0.9))^2)};
    
    \addplot[black, thick, domain=0:1.5] {3/((cosh(3* x+0.9))^2)};
    
    \node at (rel axis cs:0.95,0.95) {(e)};     
  \end{axis}
\end{tikzpicture}
\begin{tikzpicture}
  \begin{axis}[
    axis lines = middle,
    ylabel = {$u_{3,\omega}$},
    samples = 100,
    domain = -2:2,
    enlargelimits,
    xtick=\empty,
    ytick=\empty,
    xscale=1,
    yscale=0.66,
    ylabel style={anchor=east},   
  ]
  
     \addplot[black, thick, domain=-1.5:0] {-3/((cosh(3* x+0.9))^2)};
    
    \addplot[black, thick, domain=0:1.5] {3/((cosh(3* x-0.3))^2)};
    
    \node at (rel axis cs:0.95,0.95) {(f)};    
  \end{axis}
\end{tikzpicture}
\begin{tikzpicture}
  \begin{axis}[
    axis lines = middle,
    ylabel = {$\tilde{u}_{3,\omega}$},
    samples = 100,
    domain = -2:2,
    enlargelimits,
    xtick=\empty,
    ytick=\empty,
    xscale=1,
    yscale=0.66,   
    ylabel style={anchor=east},
  ]
  
     \addplot[black, thick,  domain=-1.5:0] {-3/((cosh(3* x-0.9))^2)};
    
    \addplot[black, thick,  domain=0:1.5] {3/((cosh(3* x+0.3))^2)};
    
    \node at (rel axis cs:0.95,0.95) {(g)};     
  \end{axis}
\end{tikzpicture}
\caption{Standing waves for the NLS with $\delta'$ potential of strength $\beta$. (a), (b), (d) and (f) are the shapes of standing waves for $\beta>0$. (a), (c), (e), (g) are the shapes of standing waves for $\beta < 0$.}\label{fig:stationary}
\end{figure}
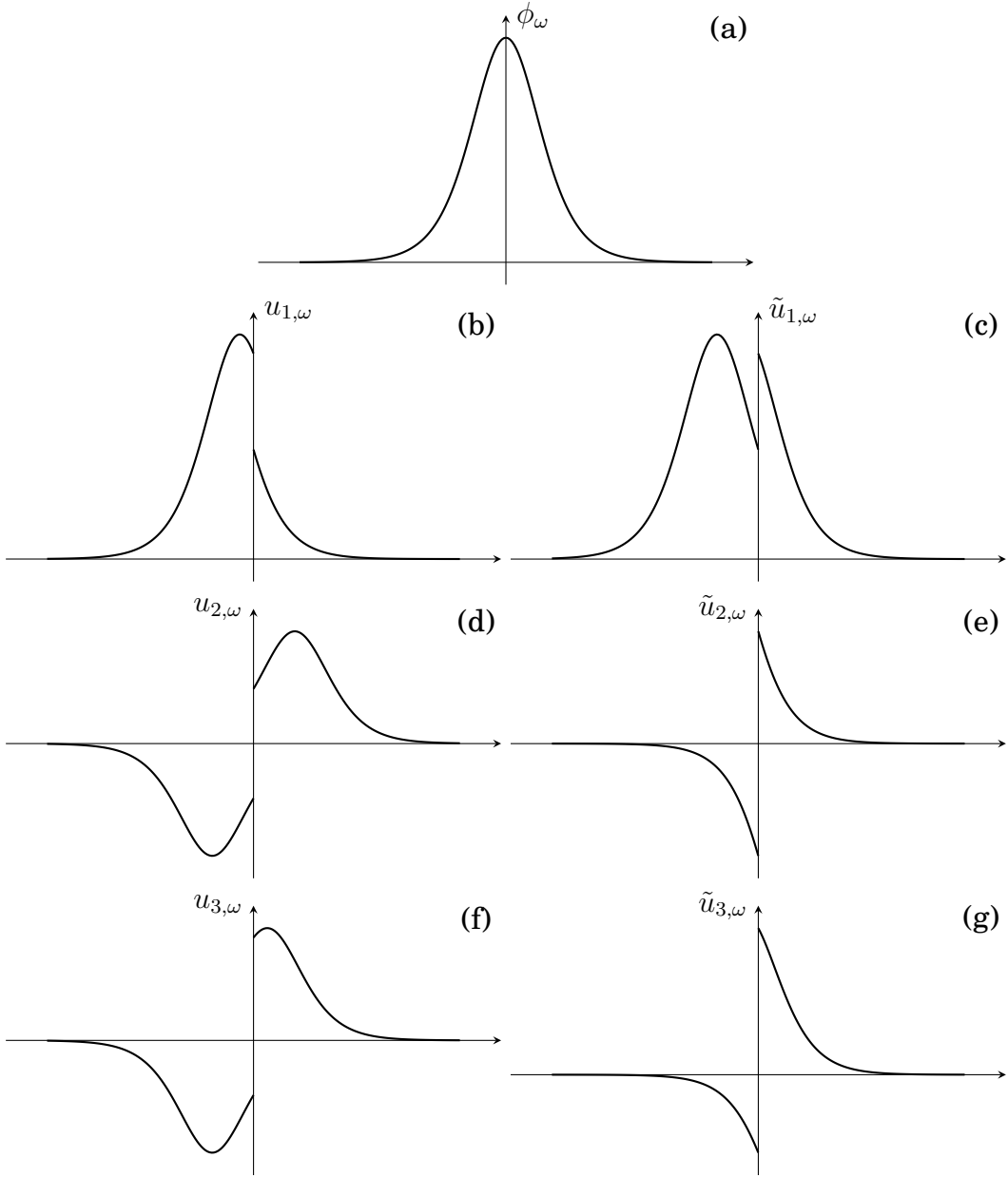

The complete family of stationary states, including the case $\beta < 0$ investigated in
\cite{Adami-Noja-Visc-13,Adami-Noja-CMP2013-1D-NLS-deltaprime,Angulo-Goloshchapova-2020,Goloshchapova-2022},
is represented in Fig.~\ref{fig:stationary}, and
can be illustrated as follows:
\begin{enumerate}
    \item Solitons centred at the origin, unaffected by the $\delta'$ potential (first row in 
Fig.~\ref{fig:stationary}). They are present for every positive frequency.
\item Positive stationary states bifurcating from the soliton as $\beta$ becomes non-zero
(second row in 
Fig.~\ref{fig:stationary}: (b) for $\beta > 0$, (c) for $\beta < 0$). They are present for every positive frequency.
\item Symmetric sign-changing solutions (third row in 
Fig.~\ref{fig:stationary}: (d) for $\beta > 0$, (e) for $\beta < 0$). They are present above
a first frequency threshold. Their branch bifurcates from the linear ground states 
as the nonlinearity appears. Indeed, the frequency threshold corresponds to the 
energy of the linear ground state.

  \item Non-symmetric sign-changing solutions (fourth row in 
Fig.~\ref{fig:stationary}: (f) for $\beta > 0$, (g) for $\beta < 0$). They are present above
a second frequency threshold, higher than the first.  Their branch bifurcates from  
(e) or (f) as the frequency crosses the second threshold.
\end{enumerate} 
Our main result establishes that ground states for $\beta > 0$ always belong to the branch (b).
It is already known that for $\beta < 0$ ground states belong to the branch (g), for any frequency above the second threshold, and to (e) for any frequency between the two thresholds 
\cite{Adami-Noja-Visc-13,Adami-Noja-CMP2013-1D-NLS-deltaprime}.

In \cite{Adami-Noja-Visc-13,Adami-Noja-CMP2013-1D-NLS-deltaprime} the bifurcations at point (3) and (4) are rigorously proven. The full bifurcation structure of all solutions remains partly conjectural and needs further study.

The paper is organized as follows:
\begin{itemize}
    \item In Sec.~\ref{sec:results} we give the mathematical formulation of the problem and
    the precise statement of the results;
    \item In Sec.~\ref{sec:stationary} we explicitly find all stationary states;
    \item In Sec.~\ref{sec:action} we identify and prove the existence of the action minimizers;
    \item In Sec.~\ref{sec:minchiazzio} we identify and prove the existence of the ground states.
\end{itemize}
The well-posedness of the time evolution problem has been treated in \cite{Goloshchapova-2022},
so we do not deal with it.

	\section{Setting and Main Results}\label{sec:results}
        The rigorous  formulation of equation \eqref{eq:NLS-deltaPrime-Formal} reads
        \begin{equation}
        \label{eq:NLS-delta'}
            i \partial_t \psi \ = \ H_\beta \psi - |\psi|^{p-2} \psi, \quad \beta>0,\quad p>2,
        \end{equation}
        where $H_\beta$ is the self-adjoint realisation of the 
        one-dimensional Laplacian with a
        repulsive $\delta'$ interaction of strength $\beta$ at the origin. More precisely, $H_\beta$ is a self-adjoint operator with domain and action given  by
\begin{equation*}
    D(H_\beta):=\left\{\psi\in H^2(\R\setminus\{0\})\,:\, \psi'(0^+)=\psi'(0^-)\,,\, \psi(0^+)-\psi(0^-)=\beta \psi'(0^+)\right\}
\end{equation*}
and 
\begin{equation*}
    H_\beta \psi=-\psi'',\quad x\neq 0.
\end{equation*}
For $\beta \neq 0$, the associated quadratic form $Q_\beta$ acts on the space $H^1(\R^-)\oplus H^1(\R^+)$ as
\begin{equation*}
Q_\beta(u):=\|u'\|_{L^2(\R^-)}^2+\|u'\|_{L^2(\R^+)}^2+\frac{1}{\beta}|u(0^+)-u(0^-)|^2,
\end{equation*}
while for $\beta=0$ the quadratic form $Q_0$ acts on the space $H^1(\R)$ as $Q_0(u):=\|u'\|_{L^2(\R)}^2$. Notice
that the domain of $Q_0$ is
strictly contained in that of $Q_\beta$ with $\beta \neq 0$.

Furthermore, the operator $H_\beta$ for $\beta>0$ has a purely continuous spectrum $[0,+\infty)$ only and no negative eigenvalues. 

We are interested in the standing waves, that is solutions of \eqref{eq:NLS-delta'} of the form 
\begin{equation} \label{standing}
    \psi(t,x)=e^{i\omega t}u(x)
    ,
    \end{equation}
    thus we reduce to search for solutions to the system
\begin{equation}
\label{eq:stationary}
\begin{cases}
    -u''+\omega u-|u|^{p-2}u=0\quad \text{on}\quad \R\setminus\{0\},\\
    u'(0^+)=u'(0^-),\\
    u(0^+)-u(0^-)=\beta u'(0^+).
    \end{cases}
\end{equation}
As widely known, there are two families 
of standing waves, characterized as the solutions to  two distinct minimization problems. The first one consists of the
ground states, that is the minimizers of the
energy among functions with the same mass. The
second family is made of the action minimizers, i.e.~the minimizers of the action
functional among all functions belonging to the Nehari, or natural, manifold. In fact, there is a Nehari manifold for every choice of
the frequency $\omega$ of oscillation of the standing wave (see \eqref{standing}), so one can interpret the action minimizers as the minimizers of the action at a given frequency. The precise relationship between the two notions was clarified in \cite{DST-action-energy,Jeanjean-22}.

In several studies the terminology of ground states is applied to the action minimizers. We prefer to distinguish the two notions,  and give more emphasis to the minimizers of the energy, that are physically meaningful.

In order to introduce the first variational setting,  fix $\beta \neq 0$
and
let $E_\beta:H^1(\R^-)\oplus H^1(\R^+)\to \R$ be the energy functional associated to \eqref{eq:NLS-delta'}, defined as 
\begin{equation*}
E_\beta(u)=\frac{1}{2}Q_\beta(u)-\frac{1}{p}\|u\|_{L^{p}(\R)}^{p}.
\end{equation*}
We introduce the  function $\Eps_\beta : \R^+ \to \R$ defined as
\begin{equation}
    \label{emu}
   \Eps_\beta(\mu):=\inf\left\{E_\beta(v)\,:\, v\in H^1(\R^-)\oplus H^1(\R^+)\,,\,\|v\|_{L^2(\R)}^2=\mu\right\}. 
\end{equation}
We can now introduce the fundamental notion of ground state.
\begin{definition}[Ground state]
We say that $u\in H^1(\R^-)\oplus H^1(\R^+)$ is a  {\em ground state at mass $\mu$} if \begin{equation}
\|u\|_{L^2(\R)}^2=\mu
\label{massconstraint}
\end{equation}
and
\begin{equation*}
E_\beta(u)=\Eps_\beta(\mu).
\end{equation*}
\end{definition}

The identity \eqref{massconstraint} is called the {\em mass constraint}.

To establish the second variational setting, let $\omega>0$ and  $S_\omega:H^1(\R^-)\oplus H^1(\R^+)\to \R$ be the 
{\em action functional} defined as
\begin{equation*}
S_{\beta,\omega}(u):=E_\beta(u)+\frac{\omega}{2}\|u\|_{L^2(\R)}^2.
\end{equation*}
We introduce the  Nehari manifold
\begin{equation*}
N_{\beta,\omega}:=\left\{u\in H^1(\R^-)\oplus H^1(\R^+)\setminus\{0\}\,:\,I_{\beta,\omega}(u)=0\right\},
\end{equation*}
where 
\begin{equation*}
    I_{\beta,\omega}(u)=\langle S'_{\beta,\omega}(u), u \rangle=Q_\beta(u)+\omega\|u\|_{L^2(\R)}^2-\|u\|_{L^p(\R)}^p.
\end{equation*}
We define the function $d_\beta : \R^+ \to \R$ as
\begin{equation}
    \label{dibbeta}
    d_\beta(\omega):=\inf\{S_{\beta,\omega}(v)\,:\, v\in N_{\beta,\omega}\}.
\end{equation}
We can now introduce the following variational problem:
\begin{definition}[Action minimizer]
We say that $u$ is an {\em action minimizer at frequency $\omega$}
if
\begin{equation*}
u\in N_{\beta,\omega}
\end{equation*}
and
\begin{equation*}
S_{\beta,\omega}(u)=d_\beta(\omega):=\inf\{S_{\beta,\omega}(v)\,:\, v\in N_{\beta,\omega}\}.
\end{equation*}
\end{definition}

In the following, a crucial role is played by the {\em soliton}, that is the solution to \eqref{eq:stationary} with $\beta=0$,
given by
\begin{equation}\label{eq:SolitonExplicit}
    \phi_\omega(x) \,:=\, \left( \frac{\omega \, p}{2 \cosh^2\big(\frac{p-2}{2} \sqrt{\omega} \,x\big)} \right)^{\frac{1}{p-2}} \, .
\end{equation}
Abusing notation, we denoted by $\phi_\omega$ the soliton at fixed frequency $\omega$ and shall denote by
$\phi_\mu$ the soliton at fixed mass, namely \eqref{eq:SolitonExplicit} with $\omega$ chosen such that $\Vert \phi_\omega \Vert_{L^2(\mathbb{R})}^2=\mu$. 
By a direct computation it
can be explicitly seen that the correspondence 
between frequency and mass is one-to-one,
differentiable, and monotonically strictly increasing (see Sec. \ref{sec:stationary}).

Before stating the first result, we recall that, if $\beta = 0$ and
$2<p<6$, then ground states  exist at every value of the mass and are given, up to symmetries, by  the soliton $\phi_\mu$, thus $E_0(\phi_\mu)=\mathcal{E}_0(\mu)$.

\begin{theorem}
\label{thm:ex-GS}
    Let $\beta>0$, $2<p<6$ and $\mu>0$. Then
    \begin{equation*}
\Eps_\beta(\mu)<E_0(\phi_\mu),
    \end{equation*}
    where $\phi_\mu$ is the soliton at mass $\mu$, and there exists a unique ground state $u$ at mass $\mu$, up to multiplication by a phase factor and to the transformation $u\mapsto u(-\cdot)$. Moreover, up to these symmetries, $u$ is positive, solves for some $\omega$ the system \eqref{eq:stationary} and, denoted with $\phi_\omega$ the soliton at frequency $\omega$, has the form
    \begin{equation}
\label{eq:profile-u-sol}
u(x)=
    \begin{cases}
    \phi_\omega(x+x_+),\quad x>0\\
        \phi_\omega(x+x_-),\quad x<0,
    \end{cases}
\end{equation}
with $x_+>x_->0$.

The quantities $t_\pm:=\tanh\left(\frac{p-2}{2} \sqrt{\omega}x_\pm\right)$ solve the system
\begin{equation}
\label{eq:t+t-pos}
    \begin{cases}
        t_+(1-t_+^2)^{\frac{1}{p-2}}=t_-(1-t_-^2)^{\frac{1}{p-2}},\\
        t_+^{-1}-t_-^{-1}=-\beta \sqrt{\omega}.
    \end{cases}
\end{equation} 
\end{theorem}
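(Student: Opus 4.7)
The plan is to combine the explicit classification of stationary states from Sec.~\ref{sec:stationary}, the existence of action minimizers from Sec.~\ref{sec:action}, and the link between action minimizers and ground states expressed by Lemma~\ref{lem:link-GS-AM}. From Sec.~\ref{sec:stationary}, every $H^1$ solution of $-u''+\omega u=|u|^{p-2}u$ on a half-line is a translate of $\phi_\omega$, so imposing continuity of the derivative at $0$ and the jump condition $u(0^+)-u(0^-)=\beta u'(0^+)$ on a profile of the form \eqref{eq:profile-u-sol} yields exactly the algebraic system \eqref{eq:t+t-pos}; this identifies the candidate ground states. From Sec.~\ref{sec:action}, at every frequency $\omega>0$ there is an action minimizer $u_\omega\in N_{\beta,\omega}$ which has precisely this asymmetric positive form with $x_+>x_->0$. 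Using the explicit expression \eqref{eq:SolitonExplicit}, I would then compute the mass $\mu(\omega):=\|u_\omega\|_{L^2(\R)}^2$ as a function of $\omega$ and check that it is a continuous strictly monotone bijection of $(0,\infty)$ onto itself, so that every mass $\mu$ is realized by a unique frequency $\omega_\mu$.

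The crux is the strict inequality $\mathcal{E}_\beta(\mu)<E_0(\phi_\mu)$. The non-strict version is trivial: the soliton $\phi_\mu$ belongs to the energy domain $H^1(\R^-)\oplus H^1(\R^+)$, has no jump at $0$, and thus satisfies $E_\beta(\phi_\mu)=E_0(\phi_\mu)$, whence $\mathcal{E}_\beta(\mu)\leq E_0(\phi_\mu)$. To obtain strictness I would use $u_{\omega_\mu}$ as a competitor: either compute $E_\beta(u_{\omega_\mu})-E_0(\phi_\mu)$ directly from the explicit formulas and check its sign, or argue at the level of the action by observing that $u_\omega$ and $\phi_\omega$ both lie in $N_{\beta,\omega}$ with $S_{\beta,\omega}(u_\omega)\leq S_{\beta,\omega}(\phi_\omega)$ and transferring this inequality to the mass-constrained energy via a Pohozaev-type identity. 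A third option is to compute the second variation of $E_\beta$ at $\phi_\mu$ along a mass-preserving perturbation which respects the jump condition and exhibit a direction in which it is strictly negative.

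Once the strict inequality is in place, a standard concentration-compactness argument along the lines of \cite{Adami-Serra-Tilli-2016} produces a minimizer $u^{*}$ of \eqref{emu}: the strict inequality rules out the scenario in which all the mass escapes to $\pm\infty$, which would recover exactly $E_0(\phi_\mu)$. The minimizer $u^{*}$ is an Euler--Lagrange solution of \eqref{eq:stationary} at some frequency $\omega>0$ and, by Lemma~\ref{lem:link-GS-AM}, is also an action minimizer at that frequency. Step~2 then forces $u^{*}$ to coincide, up to a constant phase and the reflection $u\mapsto u(-\cdot)$, with the profile \eqref{eq:profile-u-sol}.

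Uniqueness reduces to checking that \eqref{eq:t+t-pos} has at most one solution $(t_-,t_+)\in(0,1)^2$ with $t_+>t_-$: the first equation constrains the pair to the same level set of the function $f(t):=t(1-t^2)^{1/(p-2)}$, which is unimodal on $(0,1)$, and the second equation, exploiting $\beta>0$, pins down the unique admissible pair on opposite sides of the maximum of $f$. \textbf{The hardest step is the strict energy gain}: although the explicit soliton formulas make a direct computation feasible in principle, controlling the sign uniformly in $\mu$ and $\beta$ will likely require a monotonicity argument along the bifurcation branch, or the above-mentioned variational transfer from the action to the energy. All remaining steps --- classification of stationary states, existence of action minimizers, concentration-compactness, and the algebraic uniqueness of $(t_-,t_+)$ --- are either covered by the earlier sections or reduce to routine computations.
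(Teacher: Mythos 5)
Your overall architecture (classification of stationary states, existence of action minimizers, Lemma~\ref{lem:link-GS-AM}, concentration-compactness, monotonicity of the mass along the branch $u_{1,\omega}$) coincides with the paper's, but the step you yourself flag as the crux --- the strict inequality $\Eps_\beta(\mu)<E_0(\phi_\mu)$ --- is left unproven, and none of your three candidate strategies closes it. The direct computation of $E_\beta(u_{1,\omega_\mu})-E_0(\phi_\mu)$ compares two explicit profiles that live at the \emph{same mass but different frequencies} (the $\omega_\mu$ for which $u_{1,\omega}$ has mass $\mu$ is not the frequency $\omega(\mu)$ of the soliton of mass $\mu$), so the sign cannot be read off from the fixed-frequency comparison $S_{\beta,\omega}(u_{1,\omega})<S_{\beta,\omega}(\phi_\omega)$; and a ``Pohozaev-type transfer'' from the Nehari problem to the mass-constrained problem is precisely the delicate point, since Lemma~\ref{lem:link-GS-AM} only goes in the direction energy minimizer $\Rightarrow$ action minimizer, not the converse. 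The second-variation route is speculative: nothing guarantees that $\phi_\mu$ fails to be a \emph{local} minimizer.

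The paper resolves this by a reordering you should adopt: the strict inequality is \emph{not} needed for existence. One always has $\Eps_\beta(\mu)\le E_\beta(\phi_\mu)=E_0(\phi_\mu)$; if equality holds, $\phi_\mu$ itself attains the infimum and existence is immediate, while if the inequality is strict the concentration-compactness scheme applies (with an additional device absent from your sketch: when the weak limit vanishes, the jump $|u_n(0^+)-u_n(0^-)|$ tends to zero, and one fills it in by a linear segment to produce an $H^1(\R)$ competitor of almost the same mass and energy, reaching the contradiction $\Eps_\beta(\mu)\ge E_0(\phi_\mu)$). Once a ground state exists in either case, equality is excluded \emph{a posteriori}: if $\Eps_\beta(\mu)=E_0(\phi_\mu)$ then $\phi_\mu$ is a ground state, hence by Lemma~\ref{lem:link-GS-AM} an action minimizer at its own frequency, contradicting Theorem~\ref{thm:ex-AM}, which shows (via the elementary estimate $\|u_{1,\omega}\|_{L^p(\R)}^p<\|\phi_\omega\|_{L^p(\R)}^p$, using $x_-<x_+$) that the only action minimizers are $u_{1,\omega}$ and $u_{2,\omega}$, never the soliton. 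This contrapositive use of the link lemma is the missing idea; with it in place, your remaining steps (identification of the profile, and uniqueness via the algebraic system \eqref{eq:t+t-pos} combined with the strict monotonicity of $\omega\mapsto\|u_{1,\omega}\|_{L^2(\R)}^2$) go through as you describe.
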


As highlighted in Sec.~\ref{sec:Introduction}, this result is somewhat unexpected since nonlinear ground states exist for every value of the mass even though  no linear eigenvalue 
exists. Indeed, by exploiting concentration-compactness principle, ground states exist if there exists a competitor with energy less or equal than the free energy of the soliton, but the soliton itself is a competitor and its energy $E_\beta$ coincide with its free energy, hence ground states exist. Furthermore, the energy level is strictly lower than the energy of the soliton and is realized by a positive function with a jump at the origin. The proof of this fact is based on the connection between ground states at fixed mass and action minimizers at fixed frequency (see Lemma \ref{lem:link-GS-AM}) and takes advantage of the fact that the comparison between the energies of different competitors is easier in the context of the minimization of the action functional at fixed frequency than in the context of the minimization of the energy functional at fixed mass. 

In the second theorem, we prove the existence of action minimizers at frequency $\omega$ if and only if  $\omega>0$ and we identify  the specific profile of such minimizers.

\begin{theorem}
\label{thm:ex-AM}
Let $\beta>0$ and $p>2$. Then there exists an action minimizer at frequency $\omega$ if and only if $\omega>0$. Moreover, up to a multiplication by a phase factor and the transformation $u\mapsto u(-\cdot)$, the action minimizer is unique, positive and is of the form \eqref{eq:profile-u-sol}-\eqref{eq:t+t-pos}.
\end{theorem}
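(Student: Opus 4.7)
The plan is to identify the action minimizer by combining the explicit classification of stationary states (to be obtained in Sec.~\ref{sec:stationary}) with a direct minimization on the Nehari manifold. The key reduction is that, for $u\in N_{\beta,\omega}$, the constraint $I_{\beta,\omega}(u)=0$ collapses the action to
\[
  S_{\beta,\omega}(u)\ =\ \bigl(\tfrac{1}{2}-\tfrac{1}{p}\bigr)\|u\|_{L^p(\R)}^p\ =\ \bigl(\tfrac{1}{2}-\tfrac{1}{p}\bigr)\bigl(Q_\beta(u)+\omega\|u\|_{L^2(\R)}^2\bigr).
\]
Via the standard fibering map, any nonzero $v\in H:=H^1(\R^-)\oplus H^1(\R^+)$ is projected onto $N_{\beta,\omega}$ by the dilation $t(v)=\bigl((Q_\beta(v)+\omega\|v\|_{L^2}^2)/\|v\|_{L^p}^p\bigr)^{1/(p-2)}$, so minimizing $S_{\beta,\omega}$ on $N_{\beta,\omega}$ is equivalent to minimizing the Sobolev-type quotient $\Phi_\omega(v)=(Q_\beta(v)+\omega\|v\|_{L^2}^2)/\|v\|_{L^p}^2$ on $H\setminus\{0\}$, which is positive and coercive precisely when $\omega>0$.

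For the \emph{only if} direction, I would observe that any action minimizer is a constrained critical point---the constraint is regular since $\langle I_{\beta,\omega}'(u),u\rangle=(2-p)\|u\|_{L^p}^p<0$ on $N_{\beta,\omega}$---and therefore solves \eqref{eq:stationary}. A phase-plane analysis of the ODE on each half-line shows that for $\omega\le 0$ no non-trivial $H^1$-solutions exist (they either oscillate without decay or blow up), ruling out action minimizers. For $\omega>0$, I would take a minimizing sequence $(u_n)$ for $\Phi_\omega$, which by coercivity is bounded in $H$, and extract a weak limit $u^*$. Because the $\delta'$ at the origin breaks translation invariance, the only source of non-compactness is mass escaping to $\pm\infty$; this scenario can be excluded by a concentration-compactness dichotomy, exploiting that $\phi_\omega\in N_{\beta,\omega}$ (the jump of the soliton vanishes) yields the a priori bound $d_\beta(\omega)\le d_0(\omega)<+\infty$ and that a splitting into components at infinity would produce, after reassembly, a strictly smaller value of $\Phi_\omega$ than any soliton translate.

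Once existence is secured, characterization follows from Sec.~\ref{sec:stationary}: the minimizer $u^*$ solves \eqref{eq:stationary}, and the complete list of such solutions is the four families depicted in Fig.~\ref{fig:stationary}. Plugging the positive Ansatz \eqref{eq:profile-u-sol} into the continuity of the derivative and the jump condition at $0$ produces, via the substitution $t_\pm=\tanh(\tfrac{p-2}{2}\sqrt{\omega}\,x_\pm)$, exactly the system \eqref{eq:t+t-pos}. The main obstacle I foresee is the quantitative comparison of the action across the four families, in order to show that the positive bifurcating branch realizes the strict minimum: the sign-changing solutions carry a larger jump at $0$, which inflates the boundary contribution $|u(0^+)-u(0^-)|^2/\beta$ to $Q_\beta$ and hence the action, but this needs to be established by an explicit computation on the $\mathrm{sech}^{2/(p-2)}$ profiles. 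Uniqueness up to phase and reflection then reduces to uniqueness of $(t_+,t_-)$ with $t_+>t_->0$ solving \eqref{eq:t+t-pos}: substituting the (symmetric) first equation into the second yields a strictly monotone function of $t_+$ on $(0,1)$ whose value is pinned by $\beta\sqrt{\omega}>0$, giving the single admissible pair.
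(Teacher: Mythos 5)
Your overall strategy coincides with the paper's: reduce the Nehari minimization to minimizing $\|u\|_{L^p(\R)}^p$ (equivalently the quotient $\Phi_\omega$), recover compactness by excluding escape to infinity, and then identify the minimizer by comparing the action across the explicit families of stationary states. The cosmetic differences (fibering map instead of the relaxed set $\widetilde{N}_{\beta,\omega}=\{I_{\beta,\omega}\le 0\}$, phase-plane analysis instead of ``$d_\beta(\omega)=d_0(\omega)=0$ not attained'' for $\omega\le 0$) are fine.

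There is, however, one genuine gap, and it sits exactly at the heart of the theorem. To rule out the vanishing/escape scenario you invoke only the a priori bound $d_\beta(\omega)\le d_0(\omega)$, obtained from the soliton $\phi_\omega\in N_{\beta,\omega}$, and then assert that a splitting at infinity ``would produce, after reassembly, a strictly smaller value of $\Phi_\omega$ than any soliton translate.'' That assertion is precisely what must be proved: if the weak limit of the minimizing sequence vanishes, the jump at the origin vanishes too, the sequence is asymptotically an $H^1(\R)$ minimizing sequence, and one only gets $d_\beta(\omega)\ge d_0(\omega)$ --- which is compatible with your bound $d_\beta(\omega)\le d_0(\omega)$ and yields no contradiction. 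What is needed is the \emph{strict} inequality $d_\beta(\omega)<d_0(\omega)$, and the paper obtains it (Step 2 of its proof) by exhibiting the explicit competitor $u_{1,\omega}$ of Lemma \ref{lem:sol-pos} and computing, on the Nehari manifold where $S_{\beta,\omega}=\frac{p-2}{2p}\|\cdot\|_{L^p(\R)}^p$,
\begin{equation}
\|u_{1,\omega}\|_{L^p(\R)}^p=\int_{-\infty}^{x_-}|\phi_\omega|^p\dx+\int_{x_+}^{+\infty}|\phi_\omega|^p\dx<\|\phi_\omega\|_{L^p(\R)}^p,
\end{equation}
since $0<x_-<x_+$. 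You have all the ingredients for this (the classification of stationary states produces $u_{1,\omega}$), but as written the existence argument is circular at this point. The same one-line $L^p$ comparison also settles what you call the ``main obstacle'': on the Nehari manifold the comparison between families is purely an $L^p$-norm comparison, not a comparison of the jump contribution to $Q_\beta$ --- the sign-changing states of Lemma \ref{lem:stat-negpos} lose because their two half-solitons overlap past their maxima and hence carry \emph{more} than a full soliton in $L^p$, whereas $u_{1,\omega}$ carries less. With the strict inequality in place, the rest of your outline (natural-constraint argument, classification, and uniqueness of the pair $(t_+,t_-)$ with $t_+>t_->0$ via monotonicity) matches the paper.
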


Let us notice that, as one may expect, the result concerning action minimizers at fixed frequency is valid for any value of $p>2$, namely also in the $L^2$-critical and supercritical cases.

\section{Stationary states}\label{sec:stationary}
Preliminary to the proof of Theorems \ref{thm:ex-AM} and \ref{thm:ex-GS}, we analyze the stationary states, namely the solutions to \eqref{eq:stationary} belonging to $H^1(\R^-)\oplus H^1(\R^+)$. 

As already stated, if $\beta=0$, then the problem reduces to finding solutions in $H^1(\R)$ of the equation
\begin{equation}
\label{eq:statsoliton}
-u''+\omega u-|u|^{p-2}u=0.
\end{equation}
The only positive solutions in $H^1(\R)$ are given by the soliton $\phi_\omega$ in \eqref{eq:SolitonExplicit} and its translated. In particular, it is possible to compute explicitly the mass $\mu(\omega)$ of the soliton  $\phi_\omega$, which reads as
\begin{equation}
\label{eq:mu-omega}
\mu(\omega)=\|\phi_\omega\|_{L^2(\R)}^2=\frac{p^{\frac{2}{p-2}}}{(p-2)2^{\frac{4-p}{p-2}}}\omega^{\frac{6-p}{2(p-2)}}\int_{-1}^1 (1-t^2)^{\frac{4-p}{p-2}}\,dt,
\end{equation}
so that for $2<p<6$ the function $\omega\mapsto \mu(\omega)$ maps $(0,+\infty)$ into $(0,+\infty)$ and is bijective. One can also compute the inverse function $\omega(\mu)$, namely
\begin{equation}
\label{eq:omega-mu}
    \omega(\mu)=\left(\frac{(p-2)2^{\frac{4-p}{p-2}}\int_{-1}^1(1-t^2)^{\frac{4-p}{p-2}}\,dt}{p^\frac{2}{p-2}}\right)^\frac{2(p-2)}{6-p}\mu^{\frac{2(p-2)}{6-p}}.
\end{equation}
Therefore, one can alternatively refer to the soliton at fixed frequency $\omega$ or at fixed mass $\mu$, depending on the context. 

If $\beta>0$, then the set of stationary solutions is richer.

First of all, let us observe that, except from $u\equiv 0$, every other solution of the system \eqref{eq:stationary} does not vanish at any point $x\neq 0$ and, being continuous both on $\R^-$ and on $\R^+$, it does not change sign on each half-line: the change of sign can occur at the origin only, where the function presents a jump discontinuity. 

Moreover, this system presents some important symmetries. In fact, if $u$ is a solution, then $-u$ is a solution itself, so that without loss of generality we can assume that $u>0$ on $\R^+$. As a consequence, we can divide the set of solutions into two classes: the first class includes positive solutions on the whole real line ((a) and (b) in Fig.~\ref{fig:stationary}), while the second class includes solutions that are negative on $\R^-$ and positive on $\R^+$ ((d) and (f) in Fig.~\ref{fig:stationary}).

Let us start analyzing the first class, i.e.~the set of positive solutions of the system \eqref{eq:stationary}. 

\begin{lemma}
\label{lem:sol-pos}
Let $\beta>0$. If $\omega>0$, then the system \eqref{eq:stationary} admits three positive solutions belonging to $H^1(\R^-)\oplus H^1(\R^+)$. In particular, one of them is the soliton $\phi_\omega$ centered at $0$, while the other two solutions present a discontinuity at the origin and are either of the form 
\begin{equation}
\label{eq:u1omega}
u_{1,\omega}(x)=
    \begin{cases}
    \phi_\omega(x+x_+),\quad x>0\\
        \phi_\omega(x+x_-),\quad x<0,
    \end{cases}
\end{equation}
with $x_+>x_->0$, or of the form $u_{2,\omega}(x)=u_{1,\omega}(-x)$. In particular, $t_\pm:=\tanh\left(\frac{p-2}{2} \sqrt{\omega}x_\pm\right)$ solve the system \eqref{eq:t+t-pos}, namely 
\begin{equation}
\begin{cases}
        t_+(1-t_+^2)^{\frac{1}{p-2}}=t_-(1-t_-^2)^{\frac{1}{p-2}},\\
        t_+^{-1}-t_-^{-1}=-\beta \sqrt{\omega}.
    \end{cases}
\end{equation} 
It holds also that $0<t_-<\sqrt{\frac{p-2}{p}} < t_+ < 1$.
\end{lemma}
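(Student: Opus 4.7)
The plan is to exploit the explicit form of the ODE's homoclinic orbit and classify all positive $H^1$ solutions through two algebraic conditions at the origin. On each half-line, the equation $-u''+\omega u - u^{p-1}=0$ admits the conserved quantity $(u')^2 - \omega u^2 + \tfrac{2}{p}u^p$, which must vanish along any $H^1$ solution since $u$ and $u'$ tend to zero at infinity. The zero level curve is a homoclinic loop parametrised globally by the soliton, so every positive $H^1(\mathbb{R}^\pm)$ solution is of the form $\phi_\omega(\,\cdot\,+x_\pm)$ for some $x_\pm\in\mathbb{R}$.

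I would then impose the interface conditions in the form $\phi_\omega'(x_+)=\phi_\omega'(x_-)$ and $\phi_\omega(x_+)-\phi_\omega(x_-)=\beta\phi_\omega'(x_+)$. Squaring the first and using the first integral reduces it to $f(\phi_\omega(x_+))=f(\phi_\omega(x_-))$, where $f(y):=\omega y^2-\tfrac{2}{p}y^p$ is strictly unimodal on $[0,\phi_\omega(0)]$. Either $\phi_\omega(x_+)=\phi_\omega(x_-)$, which together with the jump condition forces $\phi_\omega'(x_+)=0$ and hence the soliton centred at $0$, or the two values differ. In the latter case the sign of $\beta$, combined with the equivalence $\phi_\omega'(y)<0 \Leftrightarrow y>0$ and the strict monotonicity of $\phi_\omega$ on $(0,\infty)$, produces $x_+>x_->0$. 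Substituting $t_\pm=\tanh(Bx_\pm)$ with $B=\tfrac{p-2}{2}\sqrt\omega$ into
\[
\phi_\omega(x)=A(1-t^2)^{1/(p-2)}, \qquad \phi_\omega'(x)=-\frac{2AB}{p-2}\, t\,(1-t^2)^{1/(p-2)},
\]
where $A=(\omega p/2)^{1/(p-2)}$ and $t=\tanh(Bx)$, the two interface conditions collapse, after dividing out common factors, into exactly the system \eqref{eq:t+t-pos}.

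To close the argument I would prove that \eqref{eq:t+t-pos} has a unique solution. Let $F(t):=t(1-t^2)^{1/(p-2)}$; a direct computation gives $F'(t)=(1-t^2)^{1/(p-2)-1}\bigl(1-\tfrac{p}{p-2}t^2\bigr)$, so $F$ is strictly unimodal on $(0,1)$ with maximum at $t^*:=\sqrt{(p-2)/p}$. The equation $F(t_+)=F(t_-)$ with $t_+\neq t_-$ therefore forces $0<t_-<t^*<t_+<1$, which proves the final inequality. Implicit differentiation yields $t_+'(t_-)=F'(t_-)/F'(t_+)<0$, so $h(t_-):=t_-^{-1}-(t_+(t_-))^{-1}$ satisfies $h'(t_-)=-t_-^{-2}+t_+'(t_-)/t_+^2<0$; moreover $h(t_-)\to+\infty$ as $t_-\to 0^+$ (since then $t_+\to 1$) and $h(t_-)\to 0$ as $t_-\to t^{*-}$ (since then $t_+\to t^{*+}$). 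Thus $h$ is a strictly decreasing bijection from $(0,t^*)$ onto $(0,+\infty)$, so the second equation $h(t_-)=\beta\sqrt\omega$ has exactly one solution, producing a unique $u_{1,\omega}$. The reflected function $u_{2,\omega}(x)=u_{1,\omega}(-x)$ provides the third positive stationary state. I expect the main obstacle to be the case analysis after applying the first integral: one must extract the geometric information $x_+>x_->0$ from the squared interface condition, and this requires combining the unimodality of $f$, the sign of $\beta$, and the symmetry properties of $\phi_\omega$ to exclude spurious branches that would otherwise arise from the non-injectivity of $\phi_\omega'$ on each half-line.
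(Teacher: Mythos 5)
Your proposal is correct and follows essentially the same route as the paper: reduce each half-line solution to a translate of $\phi_\omega$, convert the matching conditions at the origin into the algebraic system \eqref{eq:t+t-pos} via $t_\pm=\tanh(\tfrac{p-2}{2}\sqrt{\omega}\,x_\pm)$, use the unimodality of $F(t)=t(1-t^2)^{1/(p-2)}$ with maximum at $\sqrt{(p-2)/p}$ to locate $t_\pm$ on opposite sides of the peak, and conclude existence and uniqueness by a monotone-intersection argument (you parametrize $t_+$ by $t_-$ and show $t_-^{-1}-t_+^{-1}$ is a decreasing bijection onto $(0,+\infty)$, which is the same mechanism as the paper's intersection of the curves $g$ and $h$). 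The only cosmetic differences are that you justify the soliton-translate structure via the first integral, which the paper merely asserts, and your detour through the squared derivative condition is ultimately bypassed by the direct $t_\pm$ substitution.
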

\begin{proof}
By the first equation in \eqref{eq:stationary}, any positive solution belonging to $H^1(\R^-)\oplus H^1(\R^+)$ has the form \eqref{eq:u1omega}, with $x_\pm$ proper translations to be determined.

Let us observe also that the only positive solution of \eqref{eq:stationary} with $u'(0^+)=0$ is given by the soliton centered at $0$. We are thus reduced to finding positive solutions of \eqref{eq:stationary} with $u'(0^+)\neq 0$. 

Denoting $t_\pm:=\tanh\left(\frac{p-2}{2} \sqrt{\omega}x_\pm\right)\in (-1,1)$, since
\begin{equation*}
    \phi'_\omega(x)=-\sqrt{\omega}\tanh\left(\frac{p-2}{2}\sqrt{\omega}x\right)\phi_\omega(x)
\end{equation*}
and $\phi_\omega(x_\pm)$ and $t_\pm$ do not vanish, then boundary condition in \eqref{eq:stationary} reduce to the system \eqref{eq:t+t-pos}.

Let us now notice that, if $u$ is a positive solution, then $u(-\cdot)$ is a positive solution too, hence without loss of generality we can focus on positive solutions satisfying $u'(0^+)< 0$, that correspond to $t_+,t_-\in(0,1)$. We can rewrite the second equation in \eqref{eq:t+t-pos} as
\begin{equation}\label{eq:Topo-Gigio-Va-In-Pensione}
	t_- h(t_+):=\frac{t_+}{\beta \sqrt{\omega} \, t_+ +1},
\end{equation}
from which it immediately follows that $0<t_-<t_+<1$ and $0<x_-<x_+$. Moreover, it holds that 
\begin{equation*}
 \lim_{t\to 0^+} h(t)=0,\quad \lim_{t\to 1} h(t)=\frac{1}{\beta\sqrt{\omega}+1}   
\end{equation*}
and $h$ is strictly increasing.

Let us now focus on the first equation in \eqref{eq:t+t-pos}, that can be rewritten as \begin{equation}\label{eq:SecondEqq}
t_+^{p-2}\left(1-t_+^2\right)=t_-^{p-2}\left(1-t_-^2\right).
\end{equation}
and let us note that $t_+=t_-$ is always a solution (but it is not compatible with the first equation). In order to discuss the existence of solutions $t_+\neq t_-$, let us define the function $f(t):=t^{p-2}(1-t^2)$ on the interval $(0,1)$: it is easy to check that 
\begin{equation*}
\lim_{t\to 0^+} f(t)=\lim_{t\to 1^-} f(t)=0
\end{equation*}
and $f$ is strictly increasing in $\left(0,\sqrt{\frac{p-2}{p}}\right]$ and strictly decreasing in $\left[\sqrt{\frac{p-2}{p}},1\right)$, hence for every $k\in\left(0,f\left(\sqrt{\frac{p-2}{p}}\right)\right)$ there exist exactly two solutions $0<t_1<\sqrt{\frac{p-2}{p}}<t_2<1$ such that $f(t_1)=f(t_2)=k$. This entails that for every  $t_+\in \left(0,\sqrt{\frac{p-2}{p}}\right)$ there exists a unique $t_-\neq t_+$, in particular $t_-\in\left(\sqrt{\frac{p-2}{p}},1\right)$, such that \eqref{eq:SecondEqq} is satisfied, and for every $t_+\in\left(\sqrt{\frac{p-2}{p}},1\right)$ there exists a unique $t_-\neq t_+$, in particular $t_-\in \left(0,\sqrt{\frac{p-2}{p}}\right)$, such that \eqref{eq:SecondEqq} is satisfied. Therefore, one can see $t_-$ as a function of $t_+$ in the interval $\left(0,1\right)\setminus\left\{\sqrt{\frac{p-2}{p}}\right\}$, namely $t_-=:g(t_+)$.  By using the implicit definition of $g$ given by 
\begin{equation*}
g(t)^{p-2}(1-g(t)^2)=t^{p-2}(1-t^2),
\end{equation*}
it turns out that $g$ can be extended by continuity to the whole interval $(0,1)$, satisfies the following limits 
\begin{equation*}
\lim_{t\to 0^+}g(t)=1,\quad  \lim_{t\to 1^-}g(t)=0
\end{equation*}
and, by the monotonicity properties of $f$, its derivative is given by
\begin{equation*}
g'(t)=\frac{(p-2)t^{p-3}-p t^{p-1}}{(p-2)g(t)^{p-3}-p g(t)^{p-1}}<0 \quad \text{for every}\quad t\in \left(0,1\right)\setminus\left\{\sqrt{\frac{p-2}{p}}\right\},
\end{equation*}
entailing that $g$ is strictly decreasing in the interval $\left(0,1\right)$. 

Therefore, the graphs of the functions $t_+\mapsto g(t_+)$ and $t_+\mapsto h(t_+)$ intersect each other below the line $t_-=t_+$, as displayed in Fig.~\ref{fig:solpos}, hence for any $\beta,\omega>0$, the system \eqref{eq:t+t-pos} admits a unique solution $(t_-,t_+)$, with $0<t_-<
\sqrt{\frac{p-2}{p}}<t_+<1$: in particular, $t_-\to \sqrt{\frac{p-2}{p}}$ and $t_+\to \sqrt{\frac{p-2}{p}}$ as $\omega\to 0$ and $t_-\to 0$ and $t_+\to 1$ as $\omega\to+\infty$. This concludes the proof.

{\color{red}

%
%
}

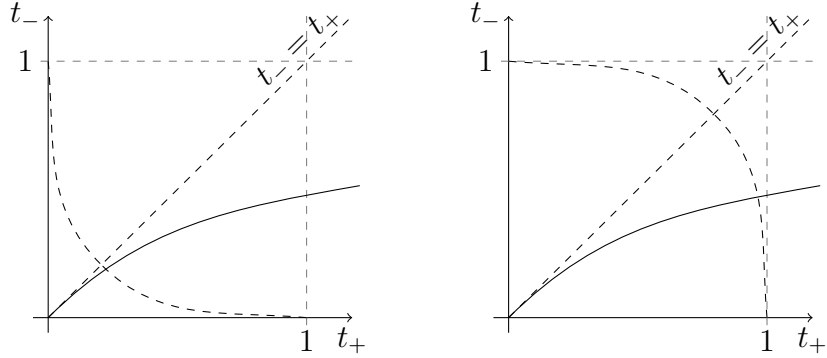
\begin{figure}[!ht]
	\begin{tikzpicture}
		\draw[->] (-0.2,0) -- (4,0);
		\draw[->] (0,-0.2) -- (0,4);
		\node at (-0.3,4) {$t_-$};
		\node at (4,-0.3) {$t_+$};
		\node at (-0.3,3.4) {$1$};
		\node at (3.4,-0.3) {$1$};
		
		\draw[-,dashed,color=gray] (3.4,-0.1) -- (3.4,4);
		\draw[-,dashed,color=gray] (-0.1,3.4) -- (4,3.4);
		
		\draw[-,color=black,dashed] (0,0) -- (4,4);
		\node[rotate=45] at (3.3,3.6) {\color{black}$t_-=t_+$};
            \draw[-,color=black] (0,0) to[in=190,out=45] (4.1,1.75);        
		
		
		\draw[-,color=black,dashed] (3.4,0) to[in=-45,out=175] (0.7,0.7);
		\draw[-,color=black,dashed] (0,3.4) to[in=90+45,out=-85] (0.7,0.7);
	\end{tikzpicture}
	$\qquad$
	\begin{tikzpicture}
		\draw[->] (-0.2,0) -- (4,0);
		\draw[->] (0,-0.2) -- (0,4);
		\node at (-0.3,4) {$t_-$};
		\node at (4,-0.3) {$t_+$};
		\node at (-0.3,3.4) {$1$};
		\node at (3.4,-0.3) {$1$};
		
		\draw[-,dashed,color=gray] (3.4,-0.1) -- (3.4,4);
		\draw[-,dashed,color=gray] (-0.1,3.4) -- (4,3.4);
		
		\draw[-,color=black,dashed] (0,0) -- (4,4);
		\node[rotate=45] at (3.3,3.6) {\color{black}$t_-=t_+$};
            \draw[-,color=black] (0,0) to[in=190,out=45] (4.1,1.75);          
		
		
		\draw[-,color=black,dashed] (3.4,0) to[in=-45,out=95] (2.7,2.7);
		\draw[-,color=black,dashed] (0,3.4) to[in=90+45,out=-5] (2.7,2.7);
	\end{tikzpicture}
	
	\caption{Graphical representation of the solutions of the system \eqref{eq:t+t-pos}. On the left, the case $p<3$; on the right, the case $p>3$. The solid and the dashed black lines represent the graph of the two functions of the system \eqref{eq:t+t-pos}.}\label{fig:solpos}
\end{figure}

\end{proof}

\begin{lemma}
\label{lem:mass-pos}
Let $u_{1,\omega}$ be the stationary state provided by Lemma \ref{lem:sol-pos}. Then the function 
\begin{equation*}
\omega\mapsto\|u_{1,\omega}\|_{L^2(\R)}^2
\end{equation*}
is strictly increasing and maps the interval $(0,+\infty)$ into $(0,+\infty)$.
\end{lemma}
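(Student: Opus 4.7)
The plan is to express $M(\omega):=\|u_{1,\omega}\|_{L^2(\R)}^2$ in closed form and then analyze its derivative with respect to $\omega$. First I would substitute the explicit profile \eqref{eq:u1omega} into the definition of the mass and, in each of the two integrals $\int_{-\infty}^{x_-}\phi_\omega(y)^2\,dy$ and $\int_{x_+}^{+\infty}\phi_\omega(y)^2\,dy$, perform the change of variable $s=\tanh\!\bigl(\tfrac{p-2}{2}\sqrt{\omega}\,y\bigr)$. Using $\mathrm{sech}^2=1-s^2$ and $ds=\tfrac{p-2}{2}\sqrt{\omega}(1-s^2)\,dy$, the outcome is a formula of the shape
\begin{equation*}
M(\omega)=C_p\,\omega^{\frac{6-p}{2(p-2)}}\!\left(\int_{-1}^{t_-(\omega)}+\int_{t_+(\omega)}^{1}\right)(1-s^2)^{\frac{4-p}{p-2}}\,ds,
\end{equation*}
with $C_p>0$ explicit, to be paired with the analogous expression $\mu(\omega)=C_p\,\omega^{(6-p)/(2(p-2))}\int_{-1}^{1}(1-s^2)^{(4-p)/(p-2)}\,ds$ for the soliton mass coming from \eqref{eq:mu-omega}.

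Next I would establish strict monotonicity by separating the $\omega$-dependence into the prefactor and the moving endpoints $t_\pm(\omega)$. From the proof of Lemma \ref{lem:sol-pos}, $t_+$ is the unique solution of $t_+^{-1}-g(t_+)^{-1}=-\beta\sqrt{\omega}$, where $g$ is the strictly decreasing function implicitly defined by $f(t_+)=f(g(t_+))$ with $f(t)=t^{p-2}(1-t^2)$. Since the left-hand side is strictly decreasing in $t_+$, implicit differentiation yields $t_+'(\omega)>0$ and hence $t_-'(\omega)=g'(t_+)\,t_+'(\omega)<0$. Differentiating the explicit formula then splits $M'(\omega)$ into a positive contribution from the prefactor (for $p<6$ the exponent $(6-p)/(2(p-2))$ is positive) and a negative contribution from the shrinking interval $(t_-(\omega),t_+(\omega))$. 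The task is to show the former dominates: the crucial identity is the first equation of \eqref{eq:t+t-pos}, i.e.\ $t_+^{p-2}(1-t_+^2)=t_-^{p-2}(1-t_-^2)$, which rewrites $(1-t_+^2)^{(4-p)/(p-2)}=(t_-/t_+)^{4-p}(1-t_-^2)^{(4-p)/(p-2)}$ and, combined with the expressions for $t_\pm'$, produces a factorization whose positivity can be checked directly.

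Finally I would read off the boundary behaviour. From Lemma \ref{lem:sol-pos} one has $t_\pm(\omega)\to\sqrt{(p-2)/p}$ as $\omega\to 0^+$, so the bracket tends to the full integral $\int_{-1}^{1}(1-s^2)^{(4-p)/(p-2)}\,ds$ and $M(\omega)\sim\mu(\omega)\to 0^+$; as $\omega\to+\infty$ instead $t_-\to 0$ and $t_+\to 1$, so the bracket tends to the finite positive constant $\int_{-1}^{0}(1-s^2)^{(4-p)/(p-2)}\,ds$ while the prefactor diverges, hence $M(\omega)\to+\infty$. Combined with the strict monotonicity, this yields the bijection onto $(0,+\infty)$. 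The main obstacle is the algebraic step above: $M'(\omega)$ is a sum of terms of opposite signs and its positivity relies on cancellations that become visible only after exploiting the first identity of \eqref{eq:t+t-pos} to interchange the endpoint powers and to simplify the implicit derivatives $t_\pm'(\omega)$.
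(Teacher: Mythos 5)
Your closed form for the mass agrees with the paper's starting point \eqref{eq:l2norm-u1}, and your limits as $\omega\to0^+$ and $\omega\to+\infty$ are exactly the ones used there. Your sign analysis $t_+'(\omega)>0$, $t_-'(\omega)<0$ is also sound: it follows from the monotonicity of the two curves in the proof of Lemma \ref{lem:sol-pos}, and it is the only conclusion compatible with the endpoint limits $t_\pm\to\sqrt{(p-2)/p}$ as $\omega\to0^+$ and $t_-\to0$, $t_+\to1$ as $\omega\to+\infty$ recorded at the end of that proof. You should be aware that the paper's own proof of Lemma \ref{lem:mass-pos} reaches the \emph{opposite} signs ($t_-'>0$, $t_+'<0$) through an implicit differentiation of the second equation of \eqref{eq:t+t-pos} in which $\tfrac{d}{d\omega}\,t_\pm^{-1}=-t_\pm^{-2}t_\pm'$ appears with the wrong sign; with those signs both factors $f(\omega)=C_p\,\omega^{(6-p)/(2(p-2))}$ and $g(\omega)=\int_{-1}^{t_-}(1-t^2)^{\frac{4-p}{p-2}}dt+\int_{t_+}^{1}(1-t^2)^{\frac{4-p}{p-2}}dt$ would be increasing and the product would be trivially increasing. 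With the correct signs, which you have, $g$ is strictly \emph{decreasing}, so the product of an increasing and a decreasing function must be handled by a genuine cancellation argument --- exactly the competition you describe.

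That competition is where your argument stops, and it is the entire content of the lemma. You assert that the first equation of \eqref{eq:t+t-pos} ``produces a factorization whose positivity can be checked directly'', but you never exhibit the factorization or perform the check, and you yourself label this ``the main obstacle''. What must be proved is $\tfrac{6-p}{2(p-2)}\,\omega^{-1}g(\omega)>(1-t_+^2)^{\frac{4-p}{p-2}}t_+'-(1-t_-^2)^{\frac{4-p}{p-2}}t_-'$, an inequality that manifestly requires $p<6$ (for $p\ge6$ the prefactor is non-increasing and the statement fails), a restriction you never invoke. The claim is provable: for instance, for $p=4$ the first equation of \eqref{eq:t+t-pos} reduces to $t_+^2+t_-^2=1$ and the second to $(t_+-t_-)/(t_+t_-)=\beta\sqrt{\omega}$, so that with $s:=t_+-t_-$ one gets $\|u_{1,\omega}\|_{L^2(\R)}^2=\tfrac{4}{\beta}\,\tfrac{s(2-s)}{1-s^2}$ with $s$ an increasing function of $\omega$ and $\tfrac{d}{ds}\tfrac{s(2-s)}{1-s^2}=\tfrac{2(1-s+s^2)}{(1-s^2)^2}>0$; but the analogous computation for general $2<p<6$ is the heart of the matter and is missing. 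Until it is supplied, the proof is incomplete at its decisive step. (A minor slip: with your signs the excluded interval $(t_-,t_+)$ \emph{grows}; it is the retained set that shrinks, which is why the endpoint contribution to $M'$ is negative, as you correctly state.)
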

\begin{proof}
Let us start by observing that 
\begin{equation}
\label{eq:l2norm-u1}
\|u_{1,\omega}\|_{L^2(\R)}^2=C_p\omega^{\frac{6-p}{2(p-2)}}\left\{\int_{-1}^{t_-} (1-t^2)^{\frac{4-p}{p-2}}\,dt +\int_{t_+}^{1} (1-t^2)^{\frac{4-p}{p-2}}\,dt\right\},
\end{equation}
with $C_p:=\frac{p^{\frac{2}{p-2}}}{(p-2)2^{\frac{4-p}{p-2}}}$ and  $t_+=t_+(\omega)>t_-=t_-(\omega)>0$ solving the system \eqref{eq:t+t-pos}. In particular, denoted with $f(\omega):=C_p\omega^{\frac{6-p}{2(p-2)}}$ and $g(\omega):=\int_{-1}^{t_-} (1-t^2)^{\frac{4-p}{p-2}}\,dt +\int_{t_+}^{1} (1-t^2)^{\frac{4-p}{p-2}}\,dt$, one can write $\|u_{1,\omega}\|_{L^2(\R)}^2=f(\omega)g(\omega)$. First, note that $f$ is strictly increasing in $(0,+\infty)$. On the other hand, let us observe that
\begin{equation}
\label{eq:g'omega}
g'(\omega)=(1-t_-^2(\omega))^{\frac{4-p}{p-2}}t_-'(\omega)-(1-t_+^2(\omega))^{\frac{4-p}{p-2}}t_+'(\omega). 
\end{equation}
In order to establish the sign of $g'(\omega)$, let us note that by the Implicit Function Theorem applied to \eqref{eq:t+t-pos} it follows that
\begin{equation*}
\begin{cases}
t_+'=\frac{(1-t_-^2)^{\frac{1}{p-2}-1}(1-\frac{p}{p-2}t_-^2)}{(1-t_+^2)^{\frac{1}{p-2}-1}(1-\frac{p}{p-2}t_+^2)}t_-',\\
\frac{t_+'}{t_+^2}-\frac{t_-'}{t_-^2}=-\frac{\beta}{2\sqrt{\omega}}.
\end{cases}
\end{equation*}
From the first equation, since $0<t_-<\sqrt{\frac{p-2}{p}}<t_+<1$, we can note that $t_+'(\omega)$ and $t_-'(\omega)$ have opposite sign. 
From the second equation, one gets
\begin{equation*}
    t_+'=\frac{t_+^2}{t_-^2}t_-'-\frac{\beta t_+^2}{2\sqrt{\omega}},
\end{equation*}
that, substituted in the first equation, leads to 
\begin{equation*}
\left(\frac{(1-t_-^2)^{\frac{1}{p-2}-1}(1-\frac{p}{p-2}t_-^2)t_-^2}{(1-t_+^2)^{\frac{1}{p-2}-1}(1-\frac{p}{p-2}t_+^2)t_+^2}-1\right)t_-'=-\frac{\beta t_-^2}{2\sqrt{\omega}}.
    \end{equation*}
Since both the right-hand side and the factor multiplying $t_-'$ are negative, it follows that $t_-'(\omega)>0$, hence $t'_+(\omega)<0$. By \eqref{eq:g'omega}, it follows that $g'(\omega)>0$. Being both $f$ and $g$ strictly increasing, also the function $\omega\mapsto\|u_{1,\omega}\|_{L^2(\R)}^2$ is strictly increasing. Moreover, it is straightforward to prove that
\begin{equation*}
\lim_{\omega\to 0^+}\|u_{1,\omega}\|_{L^2(\R)}^2=0,\quad \lim_{\omega\to+\infty}\|u_{1,\omega}\|_{L^2(\R)}^2=+\infty,
\end{equation*}
concluding the proof.
\end{proof}

In the next lemma, we investigate the second class of solutions, namely solutions of the system \eqref{eq:stationary} that are negative on $\R^-$ and positive on $\R^+$.

\begin{lemma}
\label{lem:stat-negpos}
	Let $\beta>0$. If $\omega>0$. Then any solution of the system \eqref{eq:stationary} belonging to $H^1(\R^-)\oplus H^1(\R^+)$ and being negative on $\R^-$ and positive on $\R^+$ is of the form
	\begin{equation}
		\label{eq:u-negpos}u(x)=\left\{\begin{array}{lcl}
			\phi_\omega(x-x_+) \, , &\quad & x>0 \\
			-\phi_\omega(x+x_-) \, , & & x<0
		\end{array}\right.
	\end{equation}
	where $x_\pm>0$ and $t_\pm:=\tanh(\frac{p-2}{2} \sqrt{\omega} x_\pm)$ solve the system
	\begin{equation}\label{eq:t+t-posneg}
		\begin{cases}
			t_+(1-t_+^2)^{\frac{1}{p-2}} = t_-(1-t_-^2)^{\frac{1}{p-2}} \\
			t_+^{-1}+t_-^{-1}=\beta \sqrt{\omega} \, .
		\end{cases}
	\end{equation}
	In particular
	\begin{itemize}
		\item[(a)] there are no stationary solutions  for $\omega \leq \frac{4}{\beta^2}$
		\item[(b)] there is a unique odd stationary solution for $\omega \in \left(\frac{4}{\beta^2},\frac{p}{p-2} \frac{4}{\beta^2}\right]$
		\item[(c)] there exist one odd stationary solution and two non-odd stationary solutions for $\omega \in \left(\frac{p}{p-2} \frac{4}{\beta^2},+\infty\right)$.
	\end{itemize} 
\end{lemma}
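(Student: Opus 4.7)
My plan has two stages: first, reducing the stationary problem to the algebraic system \eqref{eq:t+t-posneg}; second, counting its solutions as a function of $\omega$.

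For the reduction, I would follow the blueprint of Lemma \ref{lem:sol-pos}. On each half-line, the ODE $-u''+\omega u-|u|^{p-2}u=0$ together with $u\in H^1$ and constant sign forces $u$ to be a signed translate of the soliton $\phi_\omega$. Since $u(0^+)>0>u(0^-)$ and $\beta>0$, the jump condition \eqref{eq:stationary} yields $u'(0^+)>0$, and a check of the sign of $\phi_\omega'$ on each half-line pins down $x_\pm>0$ in the form \eqref{eq:u-negpos}. Using $\phi_\omega'(x)=-\sqrt{\omega}\tanh(\tfrac{p-2}{2}\sqrt{\omega}x)\phi_\omega(x)$ and $\phi_\omega(x_\pm)=(\omega p/2)^{1/(p-2)}(1-t_\pm^2)^{1/(p-2)}$, the continuity of $u'$ at $0$ becomes the first equation of \eqref{eq:t+t-posneg}, and the jump condition (after dividing by $\phi_\omega(x_+)$) becomes the second. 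This is a direct adaptation of the corresponding computation in Lemma \ref{lem:sol-pos}, up to a sign change on the right-hand side of the jump condition.

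For the counting, I split solutions of \eqref{eq:t+t-posneg} in $(0,1)^2$ into diagonal ($t_+=t_-$) and non-diagonal contributions. On the diagonal the first equation is automatic and the second yields $t_+=t_-=2/(\beta\sqrt{\omega})$, which lies in $(0,1)$ iff $\omega>4/\beta^2$; this gives the odd stationary state. For the non-diagonal part I would reuse the strictly decreasing involution $g:(0,1)\to(0,1)$ of Lemma \ref{lem:sol-pos}, whose fixed point is $t^*=\sqrt{(p-2)/p}$. Non-diagonal solutions of the first equation are exactly the pairs $(t_+,g(t_+))$ with $t_+\neq t^*$, and the second equation becomes $h(t_+):=1/t_+ + 1/g(t_+)=\beta\sqrt{\omega}$. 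The function $h$ is continuous on $(0,1)$, symmetric under $t_+\mapsto g(t_+)$, tends to $+\infty$ at $0^+$ and $1^-$, and satisfies $h(t^*)=2/t^*=2\sqrt{p/(p-2)}$.

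The key technical point, which I expect to be the main obstacle, is to show that $t^*$ is the unique minimizer of $h$ and that $h$ is strictly decreasing on $(0,t^*)$ and strictly increasing on $(t^*,1)$. My preferred route is a Lagrange analysis in the conjugate variables $s=1/t_+$, $w=1/g(t_+)$, in which the constraint becomes $\psi(s)=\psi(w)$ with $\psi(r):=r^{2-p}-r^{-p}$ (unimodal, unique maximum at $r=1/t^*$), while minimizing $s+w$ yields the Lagrange condition $\psi'(s)+\psi'(w)=0$. Using the explicit formula $\psi'(r)=r^{-p-1}[p-(p-2)r^2]$, I would verify that along the non-diagonal branch $\{\psi(s)=\psi(w),\,1<s<1/t^*<w\}$ this condition is satisfied only in the degenerate limit $s=w=1/t^*$, backed up by the local Taylor expansion $h(t^*+\varepsilon)=2/t^*+2\varepsilon^2/(t^*)^3+O(\varepsilon^3)$, which already identifies $t^*$ as a non-degenerate local minimum. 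Combined with the absence of interior critical points this yields strict monotonicity on each half-branch. The count of non-odd solutions then follows: none when $\beta\sqrt{\omega}<2/t^*$; only the degenerate point (which coincides with the odd state and produces no new solution) when $\beta\sqrt{\omega}=2/t^*$; and exactly two, related by the symmetry $t_+\leftrightarrow g(t_+)$, when $\beta\sqrt{\omega}>2/t^*$. Since $2/t^*=2\sqrt{p/(p-2)}$, the threshold is $\omega=\tfrac{p}{p-2}\cdot\tfrac{4}{\beta^2}$, and combining with the diagonal case yields the three ranges (a), (b), (c) of the statement.
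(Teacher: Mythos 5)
Your first stage (the reduction to \eqref{eq:u-negpos}--\eqref{eq:t+t-posneg}) is correct and is exactly what the paper does: on each half-line the $H^1$ condition and the fixed sign force a translated soliton, the jump condition with $\beta>0$ gives $u'(0^+)>0$ and hence $x_\pm>0$, and the two matching conditions become \eqref{eq:t+t-posneg}. The divergence is in the counting. The paper does not prove the solution count of \eqref{eq:t+t-posneg} at all: it observes that this system coincides verbatim with Eq.~(5.2) of \cite{Adami-Noja-CMP2013-1D-NLS-deltaprime} and imports items (a)--(c) from Theorem~5.3 there. You instead attempt a self-contained proof; your framework (diagonal solution $t_+=t_-=2/(\beta\sqrt{\omega})$ admissible iff $\beta\sqrt{\omega}>2$; non-diagonal solutions parametrized by the involution $g$ of Lemma~\ref{lem:sol-pos}; threshold at $h(t^*)=2\sqrt{p/(p-2)}$) is sound and reproduces the correct thresholds.

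However, the entire count rests on the claim that $h(t)=1/t+1/g(t)$ has $t^*$ as its unique critical point and is strictly monotone on $(0,t^*)$ and on $(t^*,1)$, and this is precisely the step you leave as ``I would verify.'' It is not automatic: on the non-diagonal branch one has $\psi'(s)>0>\psi'(w)$, so the Lagrange condition $\psi'(s)+\psi'(w)=0$ is a genuine competition between two terms of opposite sign, and excluding a zero requires an actual inequality along the constraint curve (for $p=4$ this can be done by hand, since the constraint reduces to $t_+^2+t_-^2=1$; for general $p$ it is the nontrivial content of the cited theorem). Moreover, the backup computation you offer is incorrect as stated: an involution with $g'(t^*)=-1$ need not satisfy $g''(t^*)=0$, and writing $g(t^*+\varepsilon)=t^*-\varepsilon+\tfrac12 g''(t^*)\varepsilon^2+O(\varepsilon^3)$ one finds
\begin{equation}
h(t^*+\varepsilon)=\frac{2}{t^*}+\left(\frac{2}{(t^*)^3}-\frac{g''(t^*)}{2\,(t^*)^2}\right)\varepsilon^2+O(\varepsilon^3),
\end{equation}
so the sign of the quadratic coefficient depends on $g''(t^*)$, which you have not computed; your stated expansion silently drops this term and therefore does not even establish that $t^*$ is a local minimum. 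As written, the proof of (a)--(c) has a gap at its central step; it can be closed either by carrying out the monotonicity analysis of $h$ in full or, as the paper does, by invoking \cite[Theorem 5.3]{Adami-Noja-CMP2013-1D-NLS-deltaprime}.
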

\begin{proof}
	Let us observe that, as in Lemma \ref{lem:sol-pos}, $u$ must be a proper translation of the soliton $\phi_\omega$ on each half-line. Since we look for solutions with $u(0^+)>0$ and $u(0^-)<0$, it follows that $u'(0^+)=u'(0^-)>0$, entailing that $u$ is of the form \eqref{eq:u-negpos}, with $x_\pm>0$. Denoting with $t_\pm=\tanh\left(\frac{p-2}{2}\sqrt{\omega}x_\pm\right)$ and arguing similarly as in Lemma \ref{lem:sol-pos}, the system \eqref{eq:t+t-posneg} follows. In order to conclude the proof, we observe that system \eqref{eq:t+t-posneg} is exactly the same system as in \cite[Equation (5.2)]{Adami-Noja-CMP2013-1D-NLS-deltaprime}. In particular, in the proof of \cite[Theorem 5.3]{Adami-Noja-CMP2013-1D-NLS-deltaprime} the authors proved that:
    \begin{itemize}
        \item if $\omega\leq \frac{4}{\beta^2}$ then the system \eqref{eq:t+t-posneg} admits no  solution;
        \item if $\frac{4}{\beta^2}<\omega\leq \frac{p}{p-2}\frac{4}{\beta^2}$, then there exists only one solution, and it is given by $t_-=t_+=\frac{2}{\beta\sqrt{\omega}}$;
        \item if $\omega>\frac{p}{p-2}\frac{4}{\beta^2}$, then there exists the solution $t_-=t_+=\frac{2}{\beta\sqrt{\omega}}$ and two other solutions with $t_-\neq t_+$, given respectively by $(t_1,t_2)$ and $(t_2,t_1)$. Points $(a), (b)$ and $(c)$ concerning the number of stationary solutions at fixed $\omega$ immediately follow from the multiplicity of the solutions to the system \eqref{eq:t+t-posneg}.  
    \end{itemize}

\begin{figure}[h!]
\label{fig:sol-posneg}
	\begin{tikzpicture}
		\draw[->] (-0.2,0) -- (4,0);
		\draw[->] (0,-0.2) -- (0,4);
		\node at (-0.3,4) {$t_-$};
		\node at (4,-0.3) {$t_+$};
		\node at (-0.3,3.4) {$1$};
		\node at (3.4,-0.3) {$1$};
		
		\draw[-,dashed,color=gray] (3.4,-0.1) -- (3.4,4);
		\draw[-,dashed,color=gray] (-0.1,3.4) -- (4,3.4);
		
		\draw[-,color=black,dashed] (0,0) -- (3.4,3.4);
		
		\draw[-,color=black,dashed] (3.4,0) to[in=-45,out=95] (2.2,2.2);
		\draw[-,color=black,dashed] (0,3.4) to[in=90+45,out=-5] (2.2,2.2);
		
		\draw[-,color=black] (0.4,4) to[in=178,out=-88] (4,0.4);
		
		\draw[-,color=black] (1.365,4) to[in=174,out=-84] (4,1.365);
		
		\draw[-,color=black] (2.3,4) to[in=172,out=-82] (4,2.3);
		
		\node at (5.4,0.4) {\color{black}$\omega> \frac{p}{p-2}\frac{4}{\beta^2}$};
		
		\node at (5.4,1.365) {\color{black}$\omega=\frac{p}{p-2}\frac{4}{\beta^2}$};
		
		\node at (5.4,2.3) {\color{black}$\frac{4}{\beta^2}<\omega <\frac{p}{p-2}\frac{4}{\beta^2} $};
	\end{tikzpicture}
	\caption{Graphical representation of the solutions to the system \eqref{eq:t+t-posneg}. The black dashed corresponds to the set of solutions of the first equation. Solid black lines correspond to the solutions of the second equation at different values of $\omega$. For $\omega\leq \frac{4}{\beta^2}$ the hyperbola does not have points in the square.}
\end{figure}
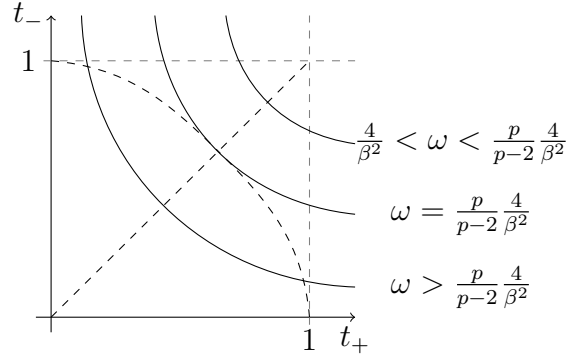
\end{proof}

\section{Action minimizers: proof of Theorem \ref{thm:ex-AM}} \label{sec:action}

This section is devoted to the proof of Theorem \ref{thm:ex-AM} about the existence, the uniqueness and the identification of the specific profile of action minimizers at frequency $\omega$.

First of all, let us introduce the auxiliary functional $\widetilde{S}:H^1(\R^-)\oplus H^1(\R^+)\to \R$, defined as
\begin{equation}
\label{eq:S-tilde}
    \widetilde{S}(u):=\frac{p-2}{2p}\|u\|_{L^p(\R)}^p,
\end{equation}
and observe that
\begin{equation}
\label{eq_S-om-S-tilde}
S_{\beta,\omega}(u)=\frac{1}{2}I_{\beta,\omega}(u)+\widetilde{S}(u)=\frac{1}{p}I_{\beta,\omega}(u)+\frac{p-2}{2p}\left(Q_{\beta}(u)+\omega\|u\|_{L^2(\R)}^2\right). 
\end{equation}
In particular, this 
entails that $S_{\beta,\omega} (v)=\widetilde{S}(v)$ for every $v\in N_{\beta,\omega}$ and 
\begin{equation*}
    d_\beta(\omega)=\inf_{v\in N_{\beta,\omega}} \widetilde{S}(v).
\end{equation*}
Moreover, in the following lemma we state a stronger result concerning the relation between the minimization problems involving $S_{\beta,\omega}$ and $\widetilde{S}$: since its proof is standard, we refer to \cite[Lemma 4.5]{ABcpde} and we do not report it here.

\begin{lemma}
\label{lem:equiv-normap}
Let $\beta\in \R \setminus \{0\}$ and $\omega>0$. Then $u\in H^1(\R^-)\oplus H^1(\R^+)\setminus\{0\}$ is an action minimizer at frequency $\omega$ if and only if  $I_{\beta,\omega}(u)\leq 0$ and 
\begin{equation*}
\widetilde{S}(u)=\inf_{v\in \widetilde{N}_{\beta,\omega}}\widetilde{S}(v),
\end{equation*}
where 
\begin{equation*}
\widetilde{N}_{\beta,\omega}:=\left\{v\in H^1(\R^-)\oplus H^1(\R^+)\,, v\neq 0\,,\,I_{\beta,\omega}(v)\leq 0\right\} \, .
\end{equation*}
Furthermore, it holds that
\begin{equation*}
   d_\beta(\omega)= \inf_{v\in \widetilde{N}_{\beta,\omega}}\widetilde{S}(v).
\end{equation*}
\end{lemma}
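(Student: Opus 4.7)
The plan is to build the whole argument around the Nehari rescaling. For a non-zero $v \in H^1(\R^-)\oplus H^1(\R^+)$, set $A(v):=Q_\beta(v)+\omega\|v\|_{L^2(\R)}^2$ and $B(v):=\|v\|_{L^p(\R)}^p$, so that $I_{\beta,\omega}(v)=A(v)-B(v)$. The map $\lambda\mapsto I_{\beta,\omega}(\lambda v)=\lambda^2 A(v)-\lambda^p B(v)$ vanishes at the unique positive value
\begin{equation*}
\lambda_v=\left(\frac{A(v)}{B(v)}\right)^{\frac{1}{p-2}},
\end{equation*}
and the condition $I_{\beta,\omega}(v)\leq 0$ is equivalent to $A(v)\leq B(v)$, hence to $\lambda_v\leq 1$. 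Note that if $v\in \widetilde{N}_{\beta,\omega}$ then $A(v)>0$ (because $v\neq 0$ and $\omega>0$), and therefore also $B(v)>0$, so $\lambda_v$ is well defined.

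\textbf{Step 1: equivalence of the two infima.} Since $N_{\beta,\omega}\subseteq\widetilde{N}_{\beta,\omega}$ and \eqref{eq_S-om-S-tilde} gives $S_{\beta,\omega}=\widetilde{S}$ on $N_{\beta,\omega}$, one has $\inf_{\widetilde{N}_{\beta,\omega}}\widetilde{S}\leq d_\beta(\omega)$. For the reverse inequality, pick any $v\in\widetilde{N}_{\beta,\omega}$: then $\lambda_v v\in N_{\beta,\omega}$ and, using $\lambda_v\leq 1$ and $p>2$,
\begin{equation*}
d_\beta(\omega)\leq S_{\beta,\omega}(\lambda_v v)=\widetilde{S}(\lambda_v v)=\lambda_v^p\,\widetilde{S}(v)\leq \widetilde{S}(v).
\end{equation*}
Taking the infimum over $v\in\widetilde{N}_{\beta,\omega}$ gives $d_\beta(\omega)\leq \inf_{\widetilde{N}_{\beta,\omega}}\widetilde{S}$, proving the final identity of the lemma.

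\textbf{Step 2: characterization of minimizers.} If $u$ is an action minimizer, then $u\in N_{\beta,\omega}\subseteq\widetilde{N}_{\beta,\omega}$ automatically satisfies $I_{\beta,\omega}(u)=0\leq 0$, and $\widetilde{S}(u)=S_{\beta,\omega}(u)=d_\beta(\omega)=\inf_{\widetilde{N}_{\beta,\omega}}\widetilde{S}$ by Step 1. Conversely, assume $u\in\widetilde{N}_{\beta,\omega}$ with $I_{\beta,\omega}(u)\leq 0$ realizes $\inf_{\widetilde{N}_{\beta,\omega}}\widetilde{S}$. If one had $I_{\beta,\omega}(u)<0$, then $\lambda_u<1$ strictly, and $\lambda_u u\in N_{\beta,\omega}\subseteq\widetilde{N}_{\beta,\omega}$ would satisfy $\widetilde{S}(\lambda_u u)=\lambda_u^p\widetilde{S}(u)<\widetilde{S}(u)$, contradicting the minimality of $u$ over $\widetilde{N}_{\beta,\omega}$. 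Hence $I_{\beta,\omega}(u)=0$, i.e.\ $u\in N_{\beta,\omega}$, and combining with Step 1 yields $S_{\beta,\omega}(u)=\widetilde{S}(u)=d_\beta(\omega)$, so $u$ is an action minimizer. There is no real obstacle here: the whole argument relies on the $p$-homogeneity of $\widetilde{S}$ being strictly stronger than the $2$-homogeneity of the quadratic part, which makes the Nehari rescaling strictly energy-decreasing when $\lambda_u<1$. The only check worth stating carefully is that $B(v)>0$ on $\widetilde{N}_{\beta,\omega}$ and that $p>2$ is used exactly once, in the strict inequality $\lambda_u^p<1$.
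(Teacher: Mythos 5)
Your argument is correct for the case the paper actually uses ($\beta>0$) and it is exactly the standard Nehari-projection proof that the paper delegates to a citation rather than writing out: project $v\in\widetilde{N}_{\beta,\omega}$ onto $N_{\beta,\omega}$ via $\lambda_v=(A(v)/B(v))^{1/(p-2)}\le 1$ and use the strict $p$-homogeneity of $\widetilde S$. Steps 1 and 2 are complete and the bookkeeping ($S_{\beta,\omega}=\widetilde S$ on $N_{\beta,\omega}$, $B(v)>0$ for $v\neq 0$, strictness of $\lambda_u^p<1$ when $I_{\beta,\omega}(u)<0$) is all in order.

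One point deserves a caveat, because the lemma is stated for all $\beta\in\R\setminus\{0\}$: your justification that $A(v)=Q_\beta(v)+\omega\|v\|_{L^2(\R)}^2>0$ ``because $v\neq 0$ and $\omega>0$'' tacitly uses $Q_\beta(v)\ge 0$, which holds only for $\beta>0$. For $\beta<0$ the form $Q_\beta$ contains the negative term $\frac{1}{\beta}|v(0^+)-v(0^-)|^2$ and is bounded below only by $-\tfrac{4}{\beta^2}\|v\|_{L^2(\R)}^2$, so for $0<\omega<\tfrac{4}{\beta^2}$ there exist $v\in\widetilde{N}_{\beta,\omega}$ with $A(v)\le 0$, for which $\lambda_v$ is undefined and the projection argument breaks down (in that regime both infima degenerate to $0$ and neither is attained, so the statement survives, but not by your argument). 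Since the paper only invokes the lemma with $\beta>0$, this is harmless for its purposes, but you should either restrict to $\beta>0$ or add the hypothesis that $\omega$ exceeds the bottom of the spectrum of $H_\beta$ so that $A(v)>0$ on all of $H^1(\R^-)\oplus H^1(\R^+)\setminus\{0\}$.
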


In the next proposition, we collect some well-known results (\cite[Lemma 2.4]{DST-action-energy} and \cite[Theorem 8.1.6]{cazenave-03}) about action minimizers at frequency $\omega$ in the case $\beta=0$, that corresponds to the absence of the point interactions at the origin.

\begin{proposition}\label{prop:SiFossiFoco}
It holds that
\begin{equation}
    d_{0}(\omega)
    \begin{cases}
       =0,\quad &\text{if}\quad\omega \leq0,\\
       >0,\quad &\text{if}\quad\omega>0,
    \end{cases}
\end{equation}
and action minimizers at frequency $\omega$ exist if and only if $\omega> 0$. Moreover, up to translations and multiplication by a phase factor, it is unique and given by the soliton $\phi_\omega$. 
\end{proposition}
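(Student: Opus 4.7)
The plan is to analyse the sign of $d_{0}(\omega)$ first, and then to treat existence, uniqueness, and the identification of the profile for $\omega>0$ separately via standard rearrangement and ODE arguments. I would work throughout with the equivalent characterization
\begin{equation*}
d_{0}(\omega)\;=\;\frac{p-2}{2p}\,\inf\bigl\{\|v\|_{L^p(\R)}^p\,:\,v\in H^1(\R)\setminus\{0\},\;I_{0,\omega}(v)\le 0\bigr\},
\end{equation*}
which holds via the very same scaling argument behind Lemma~\ref{lem:equiv-normap} (the assumption $\beta\ne 0$ there plays no role in this step). Since $\widetilde{S}\ge 0$ one already has $d_{0}(\omega)\ge 0$ for every $\omega$. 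To show that equality holds when $\omega\le 0$, I would use the dilation $v_R(x):=v(x/R)$ of a fixed nonzero $v\in H^1(\R)$: for $\omega<0$ one has $I_{0,\omega}(v_R)\to-\infty$ as $R\to\infty$, so $\lambda v_R\in\{I_{0,\omega}\le 0\}$ for every $\lambda>0$, and $\widetilde{S}(\lambda v_R)\to 0$ as $\lambda\to 0$; for $\omega=0$ one projects $v_R$ onto the Nehari manifold via the explicit $\lambda(R)^{p-2}=\|v'\|_{L^2}^2/(R^2\|v\|_{L^p}^p)$ and checks that $\widetilde{S}(\lambda(R)v_R)\sim R^{-(p+2)/(p-2)}\to 0$ as $R\to\infty$. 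In both cases the infimum equals zero but cannot be attained, because $\widetilde{S}(v)>0$ for every nonzero $v$.

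For $\omega>0$ I would obtain strict positivity via the one-dimensional Gagliardo-Nirenberg inequality
\begin{equation*}
\|u\|_{L^p(\R)}^p\;\le\; C_p\,\|u'\|_{L^2(\R)}^{(p-2)/2}\,\|u\|_{L^2(\R)}^{(p+2)/2}.
\end{equation*}
Setting $A(u):=Q_0(u)+\omega\|u\|_{L^2(\R)}^2$, the elementary bounds $\|u'\|_{L^2}^2\le A(u)$ and $\omega\|u\|_{L^2}^2\le A(u)$ combine to give $\|u\|_{L^p(\R)}^p\le C_p'\,\omega^{-(p+2)/4}A(u)^{p/2}$; imposing the Nehari identity $\|u\|_{L^p}^p=A(u)$ then forces the uniform lower bound $A(u)\ge c(\omega,p)>0$ along $N_{0,\omega}$, and in particular $d_{0}(\omega)>0$ together with $H^1$-boundedness of every minimizing sequence.

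For existence I would apply Schwarz symmetrization: the rearrangement $u_n^*$ of a minimizing sequence $u_n$ preserves $\widetilde{S}$ and diminishes the kinetic term by P\'olya-Szeg\H{o}, so $I_{0,\omega}(u_n^*)\le 0$ is retained. Bounded symmetric decreasing sequences in $H^1(\R)$ are compact in $L^p(\R)$ because of the pointwise decay $u^*(r)\le \|u^*\|_{L^2}/\sqrt{r}$ combined with local Rellich, hence up to subsequence $u_n^*\to u$ strongly in $L^p(\R)$ and weakly in $H^1(\R)$. Lower semicontinuity of $Q_0$ and strong $L^p$-convergence give $I_{0,\omega}(u)\le 0$ and $\widetilde{S}(u)=d_{0}(\omega)>0$, so $u\not\equiv 0$; the a priori lower bound from the previous step forces $u\in N_{0,\omega}$, making $u$ an action minimizer. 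The Euler-Lagrange equation reduces to \eqref{eq:statsoliton} on $\R$, and a standard phase-plane analysis identifies the unique positive even $H^1$-solution as the soliton, so $u=\phi_\omega(\cdot-x_0)$ for some $x_0\in\R$, while the $U(1)$-invariance of $S_{0,\omega}$ accounts for the phase factor. The main obstacle is the loss of compactness along general minimizing sequences due to translation invariance of the problem on the whole line; the symmetrization step circumvents this by enforcing a common centre for the competitors.
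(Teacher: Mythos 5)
The paper offers no proof of this proposition: it is imported wholesale from the literature (\cite{DST-action-energy}, \cite{cazenave-03}). Your self-contained argument is therefore a genuinely different route, and it is essentially the correct standard one: Gagliardo--Nirenberg to get $A(u)=Q_0(u)+\omega\|u\|_{L^2(\R)}^2$ bounded below on $N_{0,\omega}$ (hence $d_0(\omega)>0$ and boundedness of minimizing sequences), Schwarz symmetrization plus the decay bound $u^*(x)\le \|u^*\|_{L^2}/\sqrt{2|x|}$ to restore $L^p$-compactness for $p>2$, and the phase-plane classification of nonzero $H^1(\R)$ solutions of \eqref{eq:statsoliton} to identify the minimizer as $e^{i\theta}\phi_\omega(\cdot-x_0)$. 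What this buys over the paper's citation is a proof that remains valid for all $p>2$ and makes the non-attainment for $\omega\le 0$ explicit.

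One step needs repair. For $\omega<0$ you send $R\to\infty$, where $A(v_R)=R^{-1}\|v'\|_{L^2}^2+\omega R\|v\|_{L^2}^2<0$; such functions cannot be projected onto $N_{0,\omega}$ (the projection $\lambda^{p-2}=A(v)/\|v\|_{L^p(\R)}^p$ requires $A(v)>0$), so the identification of $d_0(\omega)$ with the relaxed infimum over $\{I_{0,\omega}\le 0\}$ --- which you justify by appeal to the mechanism of Lemma \ref{lem:equiv-normap} --- breaks down precisely in the regime you exploit. Since $N_{0,\omega}\subset\{I_{0,\omega}\le 0\}$, showing the relaxed infimum vanishes only gives $d_0(\omega)\ge 0$, which was already known; it does not give $d_0(\omega)\le 0$. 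The conclusion is nonetheless correct and the fix is one line: take $R$ slightly below $R_*:=\|v'\|_{L^2}/\bigl(\sqrt{-\omega}\,\|v\|_{L^2}\bigr)$, so that $A(v_R)>0$ is as small as desired while $\|v_R\|_{L^p(\R)}$ stays bounded away from zero; the Nehari projection then satisfies $\|\lambda v_R\|_{L^p(\R)}^p=\bigl(A(v_R)/\|v_R\|_{L^p(\R)}^2\bigr)^{p/(p-2)}\to 0$, which is the analogue of your explicit computation at $\omega=0$ (that case is handled correctly). The remaining minor looseness --- ``the a priori lower bound forces $u\in N_{0,\omega}$'' should read: if $I_{0,\omega}(u)<0$ one scales down to the Nehari manifold and strictly decreases $\widetilde{S}$, contradicting minimality --- is cosmetic.
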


We are now ready for the proof of Theorem \ref{thm:ex-AM}

\begin{proof}[Proof of Theorem \ref{thm:ex-AM}]
Since $H^1(\R) \subset H^1(\R^-) \oplus H^1(\R^+)$, then $d_{\beta}(\omega) \leq d_{0}(\omega)$ and, therefore, $0\leq d_{\beta}(\omega)\leq d_{0}(\omega)$. If $\omega<0$, by Proposition \ref{prop:SiFossiFoco} $d_{0}(\omega)=0$, therefore $d_{\beta}(\omega)=0$ and the infimum is never attained. 

Fix now $\omega>0$ and let $(u_n)_n$ be a minimizing sequence for $\widetilde{S}$ in $\widetilde{N}_{\beta,\omega}$, i.e.~$I_{\beta,\omega}(u_n)\leq0$ and $\widetilde{S}(u_n)\to d_\beta(\omega)$ as $n\to+\infty$. We divide the proof in four steps. 

\textit{Step 1. Weak convergence of $u_n$ to $u$.} First of all, $(u_n)_n$ is bounded in $L^p(\R)$ as it converges. Then, since $I_{\beta,\omega}(u_n)\leq 0$, it follows that $(u_n)_n$ is bounded also in $H^1(\R^-)\oplus H^1(\R^+)$. Therefore, there exists $u\in H^1(\R^-)\oplus H^1(\R^+)$ such that, up to subsequences, $u_n\deb u$ weakly in $H^1(\R^-)\oplus H^1(\R^+)$ and in $L^p(\R)$ and $u_n\to u$ strongly in $L^r_{\text{loc}}(\R)$ for every $r\in[2,+\infty]$, entailing that $|u_n(0^+)-u_n(0^-)|^2\to |u(0^+)-u(0^-)|^2$ as $n\to+\infty$.

\textit{Step 2. There results that $0<d_\beta(\omega)<d_0(\omega)$.} Let us consider  $v\in \widetilde{N}_{\beta,\omega}$. By Sobolev inequality, there exists $C>0$ depending only on $p$ such that
\begin{equation*}
    \begin{split}
        0\geq I_{\beta,\omega}(v)&> \|v'\|_{L^2(\R^-)}^2+\|v'\|_{L^2(\R^+)}^2-\|v\|_{L^p(\R)}^p \\
        &\geq C\|v\|_{L^p(\R^-)}^2+C\|v\|_{L^p(\R^+)}^2-\|v\|_{L^p(\R)}^p\geq \frac{C}{2}\|v\|_{L^p(\R)}^2-\|v\|_{L^p(\R)}^p.
    \end{split}
\end{equation*}
Therefore, $\|v\|_{L^p(\R)}^{p-2}\geq \frac{C}{2}$ (with $C$ depending only on $p$) and, by Lemma \ref{lem:equiv-normap}, it follows that  
\begin{equation*}
d_{\beta}(\omega)=\inf_{v\in \widetilde{N}_{\beta,\omega}} \frac{p-2}{2p}\|v\|_{L^p(\R)}^p\geq \frac{p-2}{2p}\left(\frac{C}{2}\right)^{\frac{p}{p-2}}>0.
\end{equation*}
Moreover, let us consider the stationary state $u_{1,\omega}$ in \eqref{eq:u1omega}.  Since $x_- < x_+$, one has 
\begin{equation*}
\begin{split}
d_\beta(\omega)\leq  & \frac{p-2}{2p}\|u_{1,\omega}\|_{L^p(\R)}^p=\frac{p-2}{2p}\left\{\int_{-\infty}^{x_-}|\phi_\omega|^p\,dx+\int_{x_+}^{+\infty}|\phi_\omega|^p\,dx\right\} \\ < & \frac{p-2}{2p}\|\phi_\omega\|_{L^p(\R)}^p=d_0(\omega),
\end{split}
\end{equation*}
concluding the proof of Step 2.

\textit{Step 3. It holds that $I_{\beta,\omega}(u_n)\to 0$}. Suppose by contradiction that $I_{\beta,\omega}(u_n)\not\to 0$, i.e.~up to subsequences $I_{\beta,\omega}(u_n)\to -M<0$, and define the sequence $v_n:=\alpha_n u_n$, with
\begin{equation*}
\alpha_n:=\frac{\left(Q_\beta(u_n)+\omega\|u_n\|_{L^2(\R)}^2\right)^{\frac{1}{p-2}}}{\|u_n\|_{L^p(\R)}^{\frac{p}{p-2}}}<1.
\end{equation*} 
Since 
\begin{equation*}
    \lim_{n\to+\infty} \alpha_n=\lim_{n\to+\infty} \left(1+\frac{I_{\beta,\omega}(u_n)}{\|u_n\|_{L^p(\R)}^p}\right)^{\frac{1}{p-2}}=\left(1-\frac{(p-2)M}{2p  d_\beta(\omega)}\right)^{\frac{1}{p-2}}<1,
\end{equation*}
we get
\begin{equation*}
    \lim_{n\to+\infty} \widetilde{S}(v_n)=\lim_{n\to+\infty} \alpha_n^p\widetilde{S}(u_n) = \big(\lim_{n \to \infty} \alpha_n^p\big) \big(\lim_{n \to \infty} \widetilde{S}(u_n)\big) < \lim_{n\to +\infty} \widetilde{S}(u_n)
\end{equation*}

that, together with $I_{\beta,\omega}(u_n)=0$, contradicts th assumption that $(u_n)_n$ is a minimizing sequence for $\widetilde{S}$ in $\widetilde{N}_{\beta,\omega}$, and so $\lim_{n\to+\infty}I_{\beta,\omega}(u_n)=0$.

\textit{Step 4. It holds that $u\in \widetilde{N}_{\beta,\omega}$.} Let us start proving that $u\not \equiv 0$. Suppose by contradiction $u\equiv 0$ and define the sequence $w_n:=\gamma_n u_n$, with
\begin{equation*}
\gamma_n:=\frac{\left(Q_0(u_n)+\omega\|u_n\|_{L^2(\R)}^2\right)^{\frac{1}{p-2}}}{\|u_n\|_{L^p(\R)}^{\frac{p}{p-2}}}. 
\end{equation*}
By $I_{\beta,\omega}(u_n)\to 0$ and $|u_n(0^+)-u_n(0^-)|^2\to |u(0^+)-u(0^-)|^2=0$, it follows that
\begin{equation*}
\lim_{n\to+\infty}\gamma_n=\lim_{n\to+\infty}\left(1-\frac{\frac{1}{\beta}|u_n(0^+)-u_n(0^-)|^2}{\|u_n\|_{L^p(\R)}^p}\right)^{\frac{1}{p-2}}=1.
\end{equation*}
Therefore, since definitively $I_{0,\omega}(w_n)\leq 0$, then 
\begin{equation*}
d_0(\omega)\leq \lim_{n\to+\infty}\widetilde{S}(w_n)=\lim_{n\to+\infty} \gamma_n\widetilde{S}(u_n)=d_\beta(\omega),
\end{equation*}
contradicting Step 2, then $u\not\equiv 0$. 

We are thus left to prove $I_{\beta,\omega}(u)\leq 0$. Assume again by contradiction that $I_{\beta,\omega}(u)>0$. Since $(u_n)_n$ is bounded in $L^p(\R)$ and $u_n\to u$ a.e. in $\R$, by Brezis-Lieb lemma \cite{Brezis-Lieb-83} one has
\begin{equation}
\label{eq:Stilde-BL}
    \widetilde{S}(u_n)-\widetilde{S}(u_n-u)-\widetilde{S}(u)\to 0,\quad n\to+\infty.
\end{equation}
Moreover, since $u_n\deb u$ weakly in $H^1(\R^-)\oplus H^1(\R^+)$ and $|u_n(0^+)-u_n(0^-)|^2\to |u(0^+)-u(0^-)|^2$, we have 
\begin{equation}
\label{eq:I-BL}
    I_{\beta,\omega}(u_n)-I_{\beta,\omega}(u_n-u)-I_{\beta,\omega}(u)\to 0,\quad n\to+\infty,
\end{equation}
hence by Step 3 and \eqref{eq:I-BL} it follows that $I_{\beta,\omega}(u_n-u)\to -I_{\beta,\omega}(u)<0$ and in particular, for $n$ large enough, we have $u_n-u \in \widetilde{N}_{\beta,\omega}$. Therefore, by \eqref{eq:Stilde-BL} 
\begin{equation*}
d_\beta(\omega)\leq \lim_{n\to+\infty}\widetilde{S}(u_n-u)=\lim_{n\to+\infty}\left\{\widetilde{S}(u_n)-\widetilde{S}(u)\right\}=d_\beta(\omega)-\widetilde{S}(u)<d_\beta(\omega),
\end{equation*}
which is a contradiction and concludes the proof of Step 4.

\textit{Conclusion.} Since $u_n\deb u$ weakly in $L^p(\R)$, by lower semicontinuity
\begin{equation*}
    \widetilde{S}(u)\leq \liminf_{n\to+\infty}\widetilde{S}(u_n)=d_\beta(\omega),
\end{equation*}
that, together with $I_{\beta,\omega}(u)\leq 0$, entails that $u$ is an action minimizer at frequency $\omega$. Moreover, it follows that $u_n\to u$ strongly in $L^p(\R)$ and, by Lemma \ref{lem:equiv-normap}, $I_{\beta,\omega}(u)=0$ and $u_n\to u$ strongly in $H^1(\R^-)\oplus H^1(\R^+)$. 

Finally, among all the stationary states, classified in Lemma \ref{lem:sol-pos} and Lemma \ref{lem:stat-negpos}, one aims at finding which of these are the action minimizers at frequency $\omega>0$. In particular, repeating the computations in Step 2, it is easy to check that
\begin{equation*}
\widetilde{S}(u_{1,\omega})=\widetilde{S}(u_{2,\omega})<\widetilde{S}(\phi_\omega),
\end{equation*}
where $\phi_\omega$ is the soliton at frequency $\omega$ and $u_{1,\omega}$, $u_{2,\omega}$ are the two other positive stationary solutions found in Lemma \ref{lem:sol-pos}. On the contrary, if one takes stationary states $v$ that change sign, as in Lemma \ref{lem:stat-negpos}, it is easy to show that
\begin{equation*}
\widetilde{S}(v)=\frac{p-2}{2p}\left\{\int_{-\infty}^{x_-}|\phi_\omega|\,dx+\int_{-x_+}^{+\infty}|\phi_\omega|\,dx\right\}>\widetilde{S}(\phi_\omega),
\end{equation*}
where the strict inequality follows from $x_\pm>0$.

Since all the stationary solutions are given in Lemma \ref{lem:sol-pos} and \ref{lem:stat-negpos} (up to a multiplication by a phase factor), there results that the only action minimizers are given by $e^{i\theta}u_{1,\omega}$ and $e^{i\theta}u_{2,\omega}$, with $\theta\in \R$, thus up to the additional symmetry $u\mapsto u(-\cdot)$ the unique action minimizer is given by $u_{1,\omega}$.
\end{proof}

\section{Ground states at fixed mass: proof of Theorem \ref{thm:ex-GS}} \label{sec:minchiazzio}

This section is devoted to the proof of Theorem \ref{thm:ex-GS} about the existence and uniqueness of ground states at fixed mass.

A crucial role in the identification of the specific profiles of normalized ground states is played by the next lemma, that establishes a bridge between ground states at fixed mass and action minimizers at fixed frequency. Even though an analogous result has been proven in \cite{DST-action-energy,Jeanjean-22} for the standard NLS energy, we report here the proof for the sake of completeness.

\begin{lemma}
\label{lem:link-GS-AM}
Let $u$ be a ground state at mass $\mu$. Then  $u$ is an action minimizer at frequency \begin{equation*}
\omega:=\mu^{-1}\left(\|u\|_{L^p(\R)}^p-Q_\beta(u)\right).
\end{equation*}
\end{lemma}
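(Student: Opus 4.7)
The plan is to combine a Lagrange multiplier argument, which identifies $\omega$ as the stated value, with a comparison obtained after rescaling every Nehari competitor to the mass of $u$.

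As a first step, since $u$ minimizes $E_\beta$ under the smooth codimension-one constraint $\|\cdot\|_{L^2(\R)}^2=\mu$ (whose differential $2u$ does not vanish at $u$), standard Lagrange multiplier theory produces $\omega\in\R$ such that $u$ solves \eqref{eq:stationary} with that frequency. Pairing this equation with $u$ yields $Q_\beta(u)+\omega\mu-\|u\|_{L^p(\R)}^p=0$, which simultaneously identifies $\omega$ with the expression in the statement and shows $I_{\beta,\omega}(u)=0$, i.e.\ $u\in N_{\beta,\omega}$.

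The second step is a rescaling comparison. Fix any $v\in N_{\beta,\omega}$ and set $\lambda:=\sqrt{\mu/\|v\|_{L^2(\R)}^2}$, so that $\lambda v$ has mass exactly $\mu$. The ground-state property of $u$ gives $E_\beta(u)\leq E_\beta(\lambda v)$. Substituting the Nehari identities $Q_\beta(u)=\|u\|_{L^p(\R)}^p-\omega\mu$ and $Q_\beta(v)=\|v\|_{L^p(\R)}^p-\omega\|v\|_{L^2(\R)}^2$, and observing that the $\omega$-terms cancel exactly thanks to the choice of $\lambda$, the inequality reduces to the purely algebraic statement
\begin{equation*}
\frac{p-2}{2p}\|u\|_{L^p(\R)}^p \;\leq\; \left(\frac{\lambda^2}{2}-\frac{\lambda^p}{p}\right)\|v\|_{L^p(\R)}^p.
\end{equation*}

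The third step is an elementary one-variable maximization: the function $f(\lambda):=\lambda^2/2-\lambda^p/p$ on $\R^+$ attains its global maximum at $\lambda=1$ with value $(p-2)/(2p)$, because $f'(\lambda)=\lambda(1-\lambda^{p-2})$. Hence the right-hand side above is bounded by $\frac{p-2}{2p}\|v\|_{L^p(\R)}^p=\widetilde S(v)$, while the left-hand side is $\widetilde S(u)$. Since $u,v\in N_{\beta,\omega}$, the identity $S_{\beta,\omega}=\widetilde S$ on the Nehari manifold (see \eqref{eq_S-om-S-tilde}) converts this into $S_{\beta,\omega}(u)\leq S_{\beta,\omega}(v)$; as $v$ was arbitrary, $u$ is an action minimizer at frequency $\omega$. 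The only non-routine point is the first step, namely justifying the Lagrange multiplier in the presence of the $\delta'$ boundary conditions, but this is immediate since $E_\beta$ and the mass are smooth on $H^1(\R^-)\oplus H^1(\R^+)$ and the constraint is regular; everything else is algebra and the convexity-type bound $f(\lambda)\leq f(1)$.
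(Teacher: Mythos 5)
Your proof is correct and follows essentially the same route as the paper's: both rescale an arbitrary Nehari competitor to mass $\mu$ and exploit that $t\mapsto t^2/2-t^p/p$ attains its maximum at $t=1$ along the Nehari ray, so the rescaling does not increase the action; the paper merely phrases this as a contradiction rather than as a direct comparison. Your Lagrange-multiplier step is fine, and in fact $I_{\beta,\omega}(u)=0$ already follows algebraically from the stated definition of $\omega$, so $u\in N_{\beta,\omega}$ without invoking the Euler--Lagrange equation.
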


\begin{proof}
Let $u$ be a ground state at mass $\mu$ and $\omega=\mu^{-1}\left(\|u\|_{L^p(\R)}^p-Q_\beta(u)\right)$ be the associated Lagrange multiplier. Suppose by contradiction that $u$ is not an action minimizer at frequency $\omega$, that is there exists $f\in N_{\beta,\omega}$ such that $S_{\beta,\omega}(f)<S_{\beta,\omega}(u)$. If $\theta>0$ is such that $\|\theta f\|_{L^2(\R)}^2=\mu$, then 
\begin{equation*}
S_{\beta,\omega}(\theta f)=\frac{\theta^2}{2}\left(Q_\beta(f)+\omega\|f\|_{L^2(\R)}^2\right)-\frac{\theta^p}{p}\|f\|_{L^p(\R)}^p.
\end{equation*}
Computing the derivative with respect to $t$ of the function $S_{\beta,\omega}(tf)$ and using $f\in N_{\beta,\omega}$, there results that $t=1$ is a global maximum for $S_{\beta,\omega}(\cdot f)$ in $(0,+\infty)$, namely $S_{\beta,\omega}(t f)\leq S_{\beta,\omega}(f)$ for every $t>0$, in particular $S_{\beta,\omega}(\theta f)\leq S_{\beta,\omega}(f)$. As a consequence
\begin{equation*}
E_\beta(\theta f)=S_{\beta,\omega}(\theta f)-\frac{\omega}{2}\|\theta f\|_{L^2(\R)}^2\leq S_{\beta,\omega}(f)-\frac{\omega}{2}\|u\|_{L^2(\R)}^2<S_{\beta,\omega}(u)-\frac{\omega}{2}\|u\|_{L^2(\R)}^2=E_\beta(u),
\end{equation*}
which contradicts the fact that $u$ is a ground state at mass $\mu$, entailing the thesis.
\end{proof}
\begin{proof}[Proof of Theorem \ref{thm:ex-GS}]
    Let us start proving the existence of ground states at mass $\mu$. First of all, we observe that $\Eps_\beta(\mu)\leq E_\beta(\phi_\mu)=E_0(\phi_\mu)$, thus two cases are possible: either $\Eps_\beta(\mu)=E_0(\phi_\mu)$ or $\Eps_\beta(\mu)<E_0(\phi_\mu)$. Since in the first case we can conclude that $\phi_\mu$ itself is a ground state at mass $\mu$, we focus on the second case, namely $\Eps_\beta(\mu)<E_0(\phi_\mu)$.
    
    Let $(u_n)_n$ be a minimizing sequence for $E_\beta$ at mass $\mu$, namely $E_\beta(u_n)\to\Eps_\beta(\mu)$ as $n\to+\infty$ and $\|u_n\|_{L^2(\R)}^2=\mu$. Without loss of generality, we can assume that $u_n\geq0$ for every $n\in \N$ and $u_n(0^+)\geq u_n(0^-)$: indeed, if $u_n$ is not positive, then one can pass to $|u_n|$ and the energy $E_\beta$ does not increase, while if $u_n(0^-)>u_n(0^+)$, then one can pass to $u_n(-\cdot)$ and the energy $E_\beta$ is preserved. 
    
    By $0>E_0(\phi_\mu)>\Eps_\beta(\mu)$ and Gagliardo-Nirenberg inequality 
    \begin{equation*}
\|v\|_{L^p(\R)}^p\leq C_p\left(\|v'\|_{L^2(\R^-)}^2+\|v'\|_{L^2(\R^+)}^2\right)^{\frac{p}{4}-\frac{1}{2}}\|v\|_{L^2(\R)}^{\frac{p}{2}+1}\quad \forall\, v\in H^1(\R^-)\oplus H^1(\R^+),
    \end{equation*} there results that for sufficiently large $n$
    \begin{equation*}
    0>E_\beta(u_n)\geq \frac{1}{2}\left(\|u_n'\|_{L^2(\R^-)}^2+\|u_n'\|_{L^2(\R^+)}^2\right)-\frac{C}{p}\left(\|u_n'\|_{L^2(\R^-)}^2+\|u_n'\|_{L^2(\R^+)}^2\right)^{\frac{p}{4}-\frac{1}{2}}\mu^{\frac{p}{4}+\frac{1}{2}},
    \end{equation*}
    hence $(u_n)_n$ is bounded in $H^1(\R^-)\oplus H^1(\R^+)$ since $p<6$. This entails also that $(u_n)_n$ is bounded in $L^p(\R)$, thus there exists $u\in H^1(\R^-)\oplus H^1(\R^+)$ such that, up to subsequences, $u_n\deb u$ weakly in $H^1(\R^-)\oplus H^1(\R^+)$ as well as in $L^p(\R)$, and $u_n\to u$ strongly in $L^r_{\text{loc}}(\R)$ for every $r\geq 2$: in particular, $|u_n(0^+)-u_n(0^-)|^2\to |u(0^+)-u(0^-)|^2$.

    Let us now focus on the mass of the limit $u$, namely $m:=\|u\|_{L^2(\R)}^2$. By weak lower semicontinuity of the $L^2$ norm, we have that $m\leq \liminf_n \|u_n\|_{L^2(\R)}^2=\mu$.

    Let us start assuming that $m=0$, i.e.~$u\equiv 0$, that implies that $|u_n(0^+)-u_n(0^-)|^2\to 0$ as $n\to+\infty$. Let us define the sequence
    \begin{equation*}
    v_n=
    \begin{cases}
    u_n(x),\quad &x< 0\\
    u_n(0^-)+x, \quad &0\leq x \leq u_n(0^+)-u_n(0^-)\\ 
        u_n(x-(u_n(0^+)-u_n(0^-))),\quad &x> u_n(0^+)-u_n(0^-),\\
        \end{cases}
    \end{equation*}
    which belongs to $H^1(\R)$ and whose mass is given by 
        \begin{equation*}
        \|v_n\|_{L^2(\R)}^2=\mu+\frac{1}{3}u_n^3(0^+)-\frac{1}{3}u_n^3(0^-)=\mu+o(1),\quad \text{as} \quad n\to+\infty,
    \end{equation*}
    so that the sequence $w_n:=\frac{\sqrt{\mu}}{\|v_n\|_{L^2(\R)}}v_n$ belongs to $H^1_\mu(\R)$ and $E_0(w_n)-E_\beta(u_n)\to 0$. Therefore, 
    \begin{equation*}
\Eps_\beta(\mu)=\lim_{n\to+\infty} E_\beta(u_n)=\lim_{n\to+\infty}E_0(w_n)\geq E_0(\phi_\mu),
    \end{equation*}
    which is in contradiction with the assumption that $\Eps_\beta(\mu)<E_0(\phi_\mu)$, hence $m>0$.

    Assume now that $0<m<\mu$. Since $\frac{\mu}{\|u_n-u\|_{L^2(\R)}^2}\to\frac{\mu}{\mu-m}<1$, for $n$ sufficiently large
    \begin{equation*}
     \Eps_\beta(\mu)\leq E_\beta\left({\textstyle \sqrt{\frac{\mu}{\|u_n-u\|_{L^2(\R)}^2}}}(u_n-u)\right)<\frac{\mu}{\|u_n-u\|_{L^2(\R)}^2} E_\beta(u_n-u),
    \end{equation*}
    thus
    \begin{equation}
    \label{eq:E-un-u}
     \liminf_n E_\beta(u_n-u)\geq \frac{\mu-m}{\mu}\Eps_\beta(\mu).   
    \end{equation}
    Moreover, arguing similarly, one can obtain 
    \begin{equation}
    \label{eq:E-u}
        E_\beta(u)>\frac{m}{\mu}\Eps_\beta(\mu).
    \end{equation}

    By using Brezis-Lieb Lemma \cite{Brezis-Lieb-83} (since $u_n\deb u$ in $H^1(\R^-)\oplus H^1(\R^+)$ and $u_n\to u$ a.e. on $\R$), it follows that
    \begin{equation}
    \label{eq:E-BL}
        \lim_{n\to+\infty} E_\beta(u_n)=\lim_{n\to+\infty}E_\beta(u_n-u)+E_\beta(u).
    \end{equation}
    By combining \eqref{eq:E-un-u}, \eqref{eq:E-u} and \eqref{eq:E-BL}, we obtain
    \begin{equation*}
\Eps_\beta(\mu)=\lim_n E_\beta(u_n)=\lim_n E_\beta(u_n-u)+E_\beta(u)>\frac{\mu-m}{\mu}\Eps_\beta(\mu)+\frac{m}{\mu}\Eps_\beta(\mu)=\Eps_\beta(\mu),
    \end{equation*}
    which is again a contradiction, hence $m=\mu$. Therefore, since $u_n\to u$ in $L^2(\R)$ and $u_n$ is uniformly bounded in $L^\infty(\R)$, one gets $u_n\to u$ in $L^p(\R)$ and
    \begin{equation*}
        E_\beta(u)\leq \liminf_n E_\beta(u_n)=\Eps_\beta(\mu),
    \end{equation*}
    i.e.~$u$ is a ground state of $E_\beta$ at mass $\mu$. 

   Let us exclude now that $\Eps_\beta(\mu)=E_0(\phi_\mu)$. Indeed, if this is the case, then $\phi_\mu$ is a ground state for $E_\beta$ at mass $\mu$. By Lemma \ref{lem:link-GS-AM}, $u$ is also an action minimizer at frequency $\omega:=\mu^{-1}\left(\|u\|_{L^p(\R)}^p-Q_\beta(u)\right)>0$, but this is in contradiction with Theorem \ref{thm:ex-AM}: indeed, action minimizers at frequency $\omega$ never coincide with the soliton $\phi_{\omega(\mu)}$, where $\omega(\mu)$ is the only $\omega>0$ for which $\phi_{\omega}$ has mass $\mu$ (see \eqref{eq:omega-mu}). 
   Therefore, $\Eps_\beta(\mu)<E_0(\phi_\mu)$.

   Moreover, repeating the same argument for every stationary states except $u_{1,\omega}$ and $u_{2,\omega}$, one can exclude that they are ground states at mass $\mu$. Therefore, only $u_{1,\omega}$ or $u_{2,\omega}$ can be ground state at mass $\mu$. Since by Lemma \ref{lem:mass-pos} there exists a unique $\omega(\mu)$ such that $u_{1,\omega(\mu)}$ (and also $u_{2,\omega(\mu)}$) has mass $\mu$, then $u_{1,\omega(\mu)}$ is the only ground state at mass $\mu$, up to multiplication by a phase factor and the transformation $u\mapsto u(\cdot)$. This concludes the proof.

\end{proof}

\def\cprime{$'$}

\end{document}